\numberwithin{equation}{section}
\newcommand{\Z}{{\mathbb Z}}
\newcommand{\C}{{\mathbb C}}
\newcommand{\G}{{\mathbf G}}
\newcommand{\B}{{\rm B}}
\newcommand{\T}{{\rm T}}
\newcommand{\J}{{\rm J}}
\renewcommand{\o}{{\mathfrak o}}
\newcommand{\W}{{\mathcal W}}
\newcommand{\p}{\mathfrak{p}}
\newcommand{\GL}{{\rm GL}}
\newcommand{\U}{{\rm U}}
\newtheorem*{acknowledgment}{Acknowledgment}
\numberwithin{equation}{section}
\newtheorem{theorem}[equation]{Theorem}
\newtheorem{lemma}[equation]{Lemma}
\newtheorem{prop}[equation]{Proposition}
\newtheorem{defn}[equation]{Definition}
\newtheorem{remark}[equation]{Remark}
\begin{document}

\title{Stability of Asai local factors for $GL(2)$}
\date{\today}
\thanks{}
\subjclass[2010]{11F70, 11F85, 22E50}
\keywords{Asai local factors, Bessel functions, Howe vectors}
\author{\bf Yeongseong Jo and M. Krishnamurthy }
\address{Yeongseong Jo, Department of Mathematics, University of Iowa, Iowa City, IA 52242, USA.} \email{yeongseong-jo@uiowa.edu}

\address{M. Krishnamurthy, Department of Mathematics, University of Iowa, Iowa City, IA 52242, USA.}
\email{muthu-krishnamurthy@uiowa.edu}

\begin{abstract} 
Let $F$ be a non-archimedean local field of characteristic not equal to $2$ and let $E/F$ be a quadratic algebra. We prove the stability of local factors attached to (complex) irreducible admissible representations of $GL(2,E)$ via the Rankin-Selberg method under highly ramified twists. This includes both the Asai as well as the Rankin-Selberg local factors attached to pairs. Our method relies on expressing the gamma factor as a Mellin transform using Bessel functions. \end{abstract}

\maketitle

\tableofcontents

\section{\bf Introduction and statement of the main result}
\label{sec:intro}
Let $\G=\GL_2$ and let $F$ be a non-archimedean local field whose ${\rm char}(F)\neq 2$. Let $q$ denote  the cardinality of the residue field. Let $E$ be a quadratic algebra over $F$. We consider the local $\gamma$-factor attached to an irreducible admissible (complex) representation $\Pi$ of $\G(E\otimes_F F)$ by the Rankin-Selberg method \cite{Fli1,JPSS, Kab}. Let $\gamma(s,\Pi,\chi,\psi)$ denote this $\gamma$-factor, where $\chi$ is a character (smooth) of $F^{\times}$ and $\psi=\psi_F$ is a fixed non-trivial character of $F$. Then 
\[
\gamma(s,\Pi,\chi,\psi)=\left\{\begin{array}{cc}\gamma_{\rm As}(s,\Pi,\chi,\psi)&\mbox{ if }E\mbox{ is a field}\\\gamma(s,(\pi\otimes\chi)\times\sigma,\psi)&\mbox{ if }E=F\times F.\end{array}\right.
\]
Here $\gamma(s,(\pi\otimes\chi)\times \sigma,\psi)$ is the Rankin-Selberg $\gamma$-factor attached to the representation $\Pi=(\pi\otimes\chi)\times\sigma$ of $\GL_2(F)\times \GL_2(F)$ \cite{JPSS}, and $\gamma_{\rm As}(s,\Pi,\chi,\psi)$ is the {\it  Asai} $\gamma$-factor studied by Flicker \cite{Fli1}. When $E$ is a field, if $\chi_E$ denotes a character of $E^{\times}$ whose restriction to $F^{\times}$ is $\chi$, then this is in fact the Asai $\gamma$-factor attached to the representation $\Pi\otimes\chi_E$ of $\G(E)$. By definition, it is independent of the choice of $\chi_E$. In this paper, we prove 
\begin{theorem}\label{main}
Suppose $\Pi_1$ and $\Pi_2$ are irreducible admissible representations of $\GL_2(E\otimes F)$ having the same central character. Then there exists an integer $N$ so that 
\[
\gamma(s,\Pi_1,\chi,\psi)=\gamma(s,\Pi_2,\chi,\psi)
\]
for all characters $\chi$ whose conductor $n(\chi)>N$. In other words, the local $\gamma$-factor $\gamma(s,\Pi,\chi,\psi)$ is stable for $\chi$ sufficiently highly ramified. In this situation, 
\[
  L(s,\Pi_1,\chi)=L(s,\Pi_2,\chi)\equiv 1,
\]
and consequently the epsilon factor stabilize as well: 
\[
\varepsilon(s,\Pi_1,\chi,\psi)=\varepsilon(s,\Pi_2,\chi,\psi).
\]
\end{theorem}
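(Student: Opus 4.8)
The plan is to compute $\gamma(s,\Pi,\chi,\psi)$ explicitly once $\chi$ is sufficiently ramified and to show that the resulting expression depends on $\Pi$ only through its central character $\omega_\Pi$; the theorem then follows with $N=\max\{N(\Pi_1),N(\Pi_2)\}$ since $\omega_{\Pi_1}=\omega_{\Pi_2}$. \textbf{Reduction to the supercuspidal case.} An irreducible admissible $\Pi$ of $\GL_2(E\otimes F)$ is either supercuspidal, a subquotient of a representation parabolically induced from characters of $E^\times$, or (when $E=F\times F$) an external product of such. By multiplicativity of the Asai and Rankin--Selberg $\gamma$-factors, if $\Pi$ is not supercuspidal then $\gamma(s,\Pi,\chi,\psi)$ factors as a product of abelian $\gamma$-factors attached to characters of $E^\times$ and $F^\times$ twisted by $\chi\circ N_{E/F}$, resp.\ $\chi$; by Tate's thesis each of these is an explicit Tate/Gauss-sum expression which, once $n(\chi)$ exceeds the conductors of the inducing data, depends only on $s,\psi,\chi$ and the product of the inducing characters, i.e.\ on $\omega_\Pi$. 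So it remains to treat supercuspidal $\Pi$ and to verify that the supercuspidal answer coincides with the same stable form $\gamma^{\mathrm{st}}(s,\omega_\Pi,\chi,\psi)$.

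\textbf{The $\gamma$-factor as a Mellin transform of a Bessel function.} For supercuspidal (indeed generic) $\Pi$ I would begin from the local functional equation of the Flicker--Rankin--Selberg integral $Z(s,W,\phi,\chi)$ with $W\in\mathcal W(\Pi,\psi_E)$, $\phi\in\mathcal S(F^2)$ and $g$ ranging over $N_2(F)\backslash\GL_2(F)$. Choosing $\phi$ adapted to the highly ramified twist and using the mirabolic support of $g\mapsto\phi(e_2g)$, the integral collapses to a one-dimensional Mellin transform over the diagonal torus of $\GL_2(F)$, essentially $\int_{F^\times}W\!\bigl(\mathrm{diag}(a,1)\bigr)\chi(a)|a|_F^{s-1}\,d^\times a$, and likewise for the dual integral $\widetilde Z$ with $\widetilde W$. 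Inserting Howe vectors $W_m$ — the $K_m$-eigenvectors with $W_m(e)=1$, which stabilize to and, on the relevant region, agree with the Bessel function $j_\Pi$ of $\Pi$ — turns $Z$ and $\widetilde Z$ into Mellin transforms of partial Bessel functions, so that $\gamma(s,\Pi,\chi,\psi)$ becomes a ratio of two such transforms; this is the formulation promised in the abstract.

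\textbf{Localization and the stable germ.} The key point is that the partial Bessel function $a\mapsto j_{\Pi,m}\!\bigl(\mathrm{diag}(a,1)\bigr)$ is locally constant at a level eventually dominated by $n(\chi)$ except on a bounded "edge" region of valuations, which for $n(\chi)$ large is pushed out to $v_F(a)\sim -n(\chi)$; on that edge it is given by an explicit Kloosterman/Gauss-sum type expression depending only on $\psi$ and $\omega_\Pi$. I would prove this by an explicit computation in the Kirillov model via the Howe-vector recursion, i.e.\ the $\GL_2$ asymptotics of the Bessel function. Off the edge the integrand integrates to zero against a sufficiently ramified $\chi$, so for $n(\chi)>N(\Pi)$ both $Z$ and $\widetilde Z$ evaluate to explicit expressions depending on $\Pi$ only through $\omega_\Pi$, whence $\gamma(s,\Pi,\chi,\psi)=\gamma^{\mathrm{st}}(s,\omega_\Pi,\chi,\psi)$. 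The split case $E=F\times F$ is handled identically, with the Bessel function now that of a pair on $\GL_2(F)\times\GL_2(F)$, and may alternatively be deduced from the known stability of Rankin--Selberg $\gamma$-factors.

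\textbf{$L$- and $\varepsilon$-factors, and the main obstacle.} For $\chi$ sufficiently ramified the Godement--Jacquet (resp.\ Asai) $L$-factor of $\Pi\otimes\chi$ is $1$: a highly ramified twist has no exponents, equivalently the family of zeta integrals for suitable test data is entire and attains the constant $1$, so their g.c.d.\ is $1$; the same holds for $\widetilde\Pi$. Hence $L(s,\Pi_1,\chi)=L(s,\Pi_2,\chi)\equiv1$, and from $\gamma(s,\Pi,\chi,\psi)=\varepsilon(s,\Pi,\chi,\psi)\,L(1-s,\widetilde\Pi,\chi^{-1})/L(s,\Pi,\chi)$ one gets $\gamma=\varepsilon$, so $\varepsilon$-stability is immediate from that of $\gamma$. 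The hard part is the stable-germ step: pinning down $j_{\Pi,m}\!\bigl(\mathrm{diag}(a,1)\bigr)$ on the edge region and showing, uniformly in $\Pi$, that it depends on $\Pi$ only through $\omega_\Pi$, which demands careful control of the supports and oscillation of Howe vectors in the Kirillov model and a matching of the edge contributions of $Z$ and $\widetilde Z$; the remaining reductions (multiplicativity, Tate's thesis, $L$-factor triviality) are formal.
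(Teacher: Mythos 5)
Your proposal is a genuinely different route from the paper's, and it is worth contrasting the two. You propose (a) reducing to supercuspidal $\Pi$ via multiplicativity of Asai/Rankin--Selberg $\gamma$-factors, and then (b) computing $\gamma(s,\Pi,\chi,\psi)$ \emph{explicitly} as a stable expression $\gamma^{\rm st}(s,\omega_\Pi,\chi,\psi)$ by establishing a germ/asymptotic expansion of the Bessel function $j_\Pi(\mathrm{diag}(a,1))$ on the ``edge'' region where it interacts nontrivially with a highly ramified $\chi$, and checking that this germ depends only on $\omega_\Pi$ and $\psi$. This is the classical, more ambitious paradigm (as in Jacquet--Shalika for $\GL_n\times\GL_m$ with $n\neq m$, or Cogdell--Piatetski-Shapiro--Shahidi elsewhere), and you correctly identify the stable-germ step as the real difficulty. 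The paper sidesteps exactly this difficulty: it never tries to evaluate $\gamma(s,\Pi_i,\chi,\psi)$ individually. Instead it fixes Howe--Whittaker data $W_{\Pi_1,m},W_{\Pi_2,m}$ together with a tailored Schwartz function $\Phi_{i,n}$, shows the ``easy'' side $Z(s,W_{\Pi,m},\Phi_{i,n},\chi)$ is a nonzero constant depending only on $\omega$ (Proposition~\ref{lhs}), and then proves that the \emph{difference} of the dual-side integrals is a Mellin transform of the difference of partial Bessel functions $B_m(t(a,1)w,W_{\Pi_1})-B_m(t(a,1)w,W_{\Pi_2})$. The crucial input is Proposition~\ref{Big Cell-GL(2)} (from Chai--Zhang), asserting that this \emph{difference} is supported on the big Bruhat cell $\B(F)w\J_m^{\mathscr u}$; combined with Lemma~\ref{Howe}(3), this lets one peel off the $m$-dependence entirely and deduce uniform smoothness of the integrand, so that the Mellin transform vanishes once $n(\chi)>3l$ (resp.\ $6l$ in the split case). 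Thus no explicit formula for the Bessel germ is needed, and no reduction to supercuspidal is performed: the paper handles all irreducible admissible $\Pi$ uniformly via the Langlands-quotient Whittaker model for the non-generic case, which your multiplicativity step would also have to treat with some care (multiplicativity of the Asai $\gamma$-factor under parabolic induction is itself a nontrivial point requiring a reference or proof). Finally, the paper does not handle the split case ``identically'' to the non-split one: there the support bound of Lemma~\ref{a-support}(2) plus Proposition~\ref{soudry} is used to replace the Howe--Whittaker factor of the \emph{fixed} second representation $\sigma$ by its full Bessel function $j_\sigma$ on a bounded region, after which uniform smoothness again forces vanishing. Your route could presumably be pushed through for $\GL_2$, but it would require proving more (the explicit germ expansion) than the paper's difference-of-Bessel-functions argument, which is precisely the economy the authors are after.
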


In the split case, the above theorem is not new, it is in fact proved by Jacquet and Shalika \cite{JS} for $\GL_n\times \GL_n$, $n\geq 2$. They first prove the unequal case, i.e, $\GL_n\times \GL_m, n\neq m$, and then deduce the equal case by embedding $\GL_n\times \GL_n\subset \GL_n\times \GL_{n+1}$, in order to avoid using the more complicated Rankin-Selberg integral in the equal case. On the other hand, our proof (for $\GL_2\times \GL_2$) follows from a direct manipulation of the underlying local integral. Also, when ${\rm char}(F)=0$ and $E$ is a field, the above theorem follows from \cite[Theorem A]{CCI} since the stability property for Artin $\gamma$-factors is known \cite{DH}. The proof there is global-to-local where as the proof here, using the theory of Howe vectors \cite{H}, is purely local. To the best of our knowledge, Theorem~\ref{main}  is new in the positive characteristic case. More importantly, we believe the approach here will be applicable to other situations where stability remains an open probem. (See Remark below.) We would also like to point out the recent works  \cite{AKMSS, BP, shank} on comparing the different notions of Asai local factors using global-to-local methods.

\par
We now outline our proof of Theorem~\ref{main} which is inspired by the works of Baruch \cite{Bar2} and Chai and Zhang \cite{ChaiZhang}. For an irreducible admissible representation $\Pi$ of $\GL_2(E\otimes F)$, recall the local zeta integral 
\[
 Z(s,W,\Phi,\chi)=\int_{\U(F) \backslash \G(F)} W(g)\Phi(e_2g) \chi(\mathrm{det}(g)) |\mathrm{det}(g)|^s_F\; dg.
\]
Here, $W$ lies in the Whittaker model of $\Pi$, and $\Phi$ is a Schwartz-Bruhat function on $F^2$. Also, we have $\widetilde{W}$, the Whittaker function dual to $W$, which belongs to the Whittaker model of the contragredient representation $\widetilde{\Pi}$. If $\widehat{\Phi}$ denotes the Fourier transform of $\Phi$ relative to $\psi$, then there is the associated dual zeta integral $Z(1-s,\widetilde{W},\widehat{\Phi},\chi^{-1})$. The gamma factor $\gamma(s,\Pi,\chi,\psi)$ is a rational function of $q^{-s}$ satisfying the relation
\[
  Z(1-s,\widetilde{W},\widehat{\Phi},\chi^{-1})=\gamma(s,\Pi,\chi,\psi)Z(s,W,\Phi,\chi)
\]
for all pairs $(W,\Phi)$.

\par
For $m\gg 0$ suitably large and an appropriate $\Phi$, we  show that $Z(s,W_m,\Phi,\chi)$ is a non-zero constant on $s$ and that it only depends on $\pi$ through its central character. This is done in Proposition~\ref{lhs}. Here, $W_m$ is the Howe Whittaker function as  in (\ref{howe-whitt}) on the group $\GL_2(E\otimes F)$. Following \cite{Zhang2}, we use the notation $B_m(g,W)$ to denote $W_m(g)$, since it is also a partial Bessel function in the sense of \cite{CShT}. For $\Pi_1$ and $\Pi_2$ as in Theorem~\ref{main}, let $\omega$ denote the restriction of the common central character to $F^{\times}$. Taking Whittaker functions $W_{\Pi_1}, W_{\Pi_2}$ in their Whittaker models, respectively, let $(W_{\Pi_1})_m=W_{\Pi_1,m}$ and $(W_{\Pi_2})_m=W_{\Pi_2,m}$ denote the corresponding Howe Whittaker functions. First, we show that the the difference $Z(1-s,\widetilde{W}_{\Pi_1,m},\widehat{\Phi},\chi^{-1})-Z(1-s,\widetilde{W}_{\Pi_2,m},\widehat{\Phi},\chi^{-1})$ (see \eqref{compactform}) is
\[
\approx \int\limits_{F^{\times}} [B_m(t(a,1)w,W_{\Pi_1})-B_m(t(a,1)w,W_{\Pi_2})] (\omega\chi)^{-1}(a) |a|_F^{-s}
   d^{\times}a,
  \]
where the implied constant depends only on $\omega,\chi,\psi$ and $m$. We exploit properties of Howe Whittaker functions to arrive at this equality, particularly Proposition~\ref{Big Cell-GL(2)}, which controls the support of the function $B_m(\cdot, W_{\Pi_1})-B_m(\cdot,W_{\Pi_2})$ away from the identity.

\par
We note that $m$ in the above expression depends on the conductor $n(\chi)$ of $\chi$. At this point, we split the proof into two cases according to whether $E$ is a field or not. Using properties of the functions $W_m$ and the full Bessel function, we show that the function $a\mapsto  B_m(t(a,1)w,W_{\Pi_1})-B_m(t(a,1)w,W_{\Pi_2})$ is in fact independent of $m$ for sufficiently large $m$. This in turn implies that the above integral is zero for a suitably highly highly ramified $\chi$, thus concluding the proof of our Theorem. (The claim about the $L$-function may be proved along the lines of \cite[Proposition 5.1]{JS}.) This is all explained in Section~\ref{sec-GL(2)}. We also summarize the required properties of $W_m$ in Subsection~\ref{sec-howe}.

\par
\begin{remark}We hope to generalize our method to prove the stability of the Rankin-Selberg Asai, ${\rm Sym}^2$ and $\wedge^2$ gamma factors  for $\GL_n$. This is non-trivial -- the main obstacle being the analogue of Proposition~\ref{Big Cell-GL(2)} for $\GL_n$ $($see \cite[Lemma 6.2.2]{Bar1}$)$ which introduces other relevant Weyl elements, besides the identity and the long element, while analyzing the partial Bessel function $B_m(\cdot, W_1-W_2)$. We speculate the techniques of \cite{CShT} to be useful in this regard. 
\end{remark}

\section{\bf Preliminaries} 
\subsection{Asai local factors}
\label{sec-asai}
Let $\G=\GL_2$. For any local field $F$, we write $\o_F$ to denote its ring of integers and ${\mathfrak p}_F$ for the maximal ideal in $\mathfrak{o}_F$. Let ${\rm val}_{F}: F^{\times}\rightarrow \Z$ be the associated discrete valuation map. We fix a uniformizer  $\varpi_F$, so that ${\p}_F=(\varpi_F)$ and ${\rm val}_{F}(\varpi_F)=1$. The corresponding absolute value $|x|_F=q^{-{\rm val}_F(x)}$ is the normalized absolute of $F$ with $|\varpi_F|_F=q^{-1}$, where $q=|\o_F/{\p}_F|$ is the cardinality of the residue field. For the algebraic group $\G=\GL_2$, let $\B=\T\U$ denote the Borel subgroup of upper triangular matrices, where
\[
  \T=\left\{ t(a,b):=\begin{pmatrix} a& \\ & b \end{pmatrix}\; \middle| \; a,b \in {\mathbb G}_m  \right\}
\]
is the maximal torus consisting of diagonal matrices and 
\[
  \U=\left\{ u(x):= \begin{pmatrix} 1 & x \\ & 1 \end{pmatrix}  \; \middle| \; x \in {\mathbb G}_a \right\} 
\]
is the unipotent radical of $\B$. Let ${\rm Z}$ denote the center of $\GL_2$, and let ${\rm A}$ denote the subtorus 
\[
{\rm A}=\left\{t(a,1)\;\middle|\; a\in {\mathbb G}_m\right\}. 
\]
Let 
\[
  \overline{\U}=\left\{ \overline{u}(x):= \begin{pmatrix} 1 &  \\ x & 1 \end{pmatrix}  \; \middle| \; x \in {\mathbb G}_a \right\} 
\]
be the unipotent subgroup opposed to $\U$. We write 
\[
w=\begin{pmatrix} &1 \\ 1& \end{pmatrix}
\] 
 to denote the long Weyl element in $\GL_2$. Recall the Bruhat decomposition 
\[
   \G(F)=\B(F) \cup  \B(F) w\U(F), 
\]
with uniqueness of expression, i.e., every $g\not\in \B(F)$ has a unique expression of the form  $g=bwu, b\in \B(F), u\in \U(F)$. Put ${\rm K}={\rm K}_F=\GL_2(\o_F)$, the standard maximal compact subgroup of $\G(F)$.

\par
 Let $F$ be a non-archimedean local field whose ${\rm char}(F)\neq 2$. Let $E$ be a quadratic algebra over $F$ with the associated trace map $\text{tr}_{E/F}$. There are two possibilities for $E: (\mathrm{1})$ $E=F \times F$ and $(2)$ $E$ is a field. We fix an additive character $\psi=\psi_F$ of $F$. In Case (1), it is convenient to set $\xi=(1,-1)$, and in Case (2), we fix an element $\xi \in E-F$ such that ${\rm tr}_{E/F}(\xi)=0$. We define the non-trivial additive character $\psi_E$ of $E$ by $\psi_E(x)=\psi({\rm tr}_{E/F}(\xi x))$, then $\psi_E$ is trivial on $F$. (Note that in the split case $\psi_E=(\psi,\psi^{-1})$.) The character $\psi_E$ defines a non-degenerate character of $\U(E)$ via $u(x)\mapsto \psi_E(x)$. Similarly, $\psi$ defines one for $\U(F)$. 
 
 \par
Suppose $\pi$ is an irreducible admissible representation of $\GL_2(F)$. If $\pi$ is {\it generic}, we write $\W(\pi,\psi)$ to denote associated Whittaker model relative to $\psi$. If $\pi$ is not generic, then $\pi$ is of the form $\pi=\eta_F\circ{\det}$ for some character $\eta_F$ of $F^{\times}$ and may be realized as the {\it Langlands quotient} of the (normalized) induced representation $(\rho,I(\eta_F|\cdot|_F^{1/2},\eta_F|\cdot|_F^{-1/2}))$. We fix a non-zero Whittaker functional $\lambda$ on this space and form
\[
 \mathcal{W}(\rho,\psi)=\{ W_v(g)=\lambda(\rho(g)v) \; | \; v \in I(\eta_F|\cdot|_F^{1/2},\eta_F|\cdot|_F^{-1/2}), g \in \GL_2(F) \}
\]
with the natural action of $\GL_2(F)$ by right translation. This will then play the role of the Whittaker model for $\pi$ and by abuse of notation we set $\mathcal{W}(\pi,\psi):=\W(\rho,\psi)$. Let $\pi^{\iota}$ denote the representation of $\GL_2(F)$ with $\pi^{\iota}(g)=\pi({^tg^{-1}})$ on the same underlying space $V$. It is well-known that if $\pi$ is irreducible, $\pi^{\iota}$ is isomorphic to its contragredient representation $\widetilde{\pi}$. If $\widetilde{W}(g):=W(w{^tg^{-1}})$, then $\widetilde{W}$ lies in $\mathcal{W}(\pi^{\iota},\psi^{-1})$. 

Suppose $\Pi$ is an irreducible admissible representation of $\GL_2(E \otimes F)$ with associated Whittaker model $\W(\Pi,\psi_E)$, that is,
\begin{itemize}
	\item Case $(1)$ $\Pi=\pi\otimes \sigma$, where $\pi$ (resp. $\sigma$) is an irreducible admissible representations of $\GL_2(F)$, and $\mathcal{W}(\Pi,\psi_E)=\mathcal{W}(\pi,\psi) \otimes \mathcal{W}(\sigma,\psi^{-1})$;
	\item Case $(2)$ $\Pi$ is an irreducible admissible representation of $\GL_2(E)$ with the associated Whittaker model $\mathcal{W}(\Pi,\psi_E)$.
\end{itemize}
 In Case (1), we note that $\W(\Pi,\psi_E)$ is generated as a linear space by functions $W$ of the form 
 \[
 W(g_1,g_2)=W_1(g_1)W_2(g_2), W_1\in \W(\pi,\psi), W_2\in \W(\sigma,\psi^{-1}), g_1,g_2\in \GL_2(F).
 \]
 In this situation, we write $W(g)=W(g,g)$ to denote its value on the diagonal subgroup $\GL_2(F)\hookrightarrow \GL_2(E\otimes F)$.
 \par
 Let $C_c^{\infty}(F^2)$ be the space of locally constant, compactly supported functions $\Phi : F^2 \rightarrow \mathbb{C}$. We write $e_1=(1,0)$ and $e_2=(0,1)$ to denote the standard basis for $F^2$. The Fourier transform of $\Phi \in  C_c^{\infty}(F^2)$ with respect to $\psi$ is given by
\[
 \widehat{\Phi}(y)= \widehat{\Phi}_{\psi}(y)=\int_{F^2} \Phi(x) \psi(x{^ty}) \; dx,
\]
where $dx$ is the self-dual measure for which the Fourier inversion formula takes the form $\widehat{\widehat{\Phi}}(x)=\Phi(-x)$.
 
 \par
For $W \in \mathcal{W}(\Pi,\psi_E)$, $\Phi \in C_c^{\infty}(F^2)$, a smooth character $\chi$ of $F^{\times}$, and $s\in \C$, consider the zeta integral \cite{Fli1, Kab}
 \[
   Z(s,W,\Phi,\chi)=\int_{\U(F) \backslash \G(F)} W(g)\Phi(e_2g) \chi(\mathrm{det}(g)) |\mathrm{det}(g)|^s_F\; dg,
 \]
 where $\G(F)\hookrightarrow \G(E)$ embedded diagonally. 
In Case (2), if $\chi_E$ is a character of $E^{\times}$ whose restriction to $F^{\times}$ is $\chi$, then this is the {\it Flicker} integral \cite{Fli1} attached to the representation $\pi\otimes\chi_E$ and it clearly only depends on the restriction of $\chi$. 

\par
For each $(s,W,\Phi)$ as above, the integral $Z(s,W,\Phi,\chi)$ converges absolutely when $\Re(s)$ is sufficiently large and defines a rational function of $q^{-s}$. The collection of all such integrals span a $\C[q^s,q^{-s}]$-fractional ideal of $\C(q^{-s})$. Further, there is a function $\gamma(s,\Pi,\chi,\psi,\xi)\in \C(q^{-s})$ satisfying the local functional equation
\begin{equation}\label{lfe}
 Z(1-s,\widetilde{W},\widehat{\Phi},\chi^{-1})=\gamma(s,\Pi,\chi,\psi,\xi)Z(s,W,\Phi,\chi) 
\end{equation}
We refer the reader to \cite[Theorem 2.7]{JPSS} and \cite[Theorem 2]{Kab} for proofs of these properties. We write 
\[
\gamma(s,\Pi,\chi,\psi,\xi)=\left\{\begin{array}{cc}\gamma(s,(\pi\otimes\chi)\times\sigma,\psi)&\mbox{ in Case (1)}\\\gamma_{\rm As}(s,\Pi,\chi,\psi,\xi)&\mbox{ in Case (2)}\end{array}\right.
\]
to denote the Rankin-Selberg $\gamma$-factor and the {\it Asai} $\gamma$-factor, respectively.  From loc.cit., we also have the corresponding Asai $L$-factor $L_{\rm As}(s,\Pi,\chi)$ and the Asai $\varepsilon$-factor $\varepsilon_{\rm As}(s,\Pi,\chi,\psi,\xi)$ satisfying  
\[
\varepsilon_{\rm As}(s,\Pi,\chi,\psi,\xi)=\gamma_{\rm As}(s,\Pi,\chi,\psi,\xi)\frac{L_{\rm As}(s,\Pi,\chi)}{L_{\rm As}(1-s,\Pi^{\iota},\chi^{-1})},
\]
and  the Rankin-Selberg $L$-factor $L(s,(\pi\otimes\chi)\times\sigma)$ and $\varepsilon$-factor $\varepsilon(s,(\pi \otimes \chi) \times \sigma,\psi)$ satisfying 
\[
  \varepsilon(s,(\pi \otimes \chi) \times \sigma,\psi)=\gamma(s,(\pi \otimes \chi) \times \sigma,\psi) \frac{L(s,(\pi \otimes \chi) \times \sigma)}{L(1-s,(\pi^{\iota}\otimes \chi^{-1}) \times \sigma^{\iota})}.
\]

\subsection{Dependence on the pair $(\psi,\xi)$}
We end this Section by examining the dependence of the local $\gamma$-factor on the pair $(\psi,\xi)$. A different choice of $(\psi',\xi')$ results in elements $a,b\in F^{\times}$, satisfying $\psi'(x)=\psi(ax), x\in F$, and $\xi'=b\xi$. Let $\omega_{\Pi}$ denote the central character of $\Pi$ and put $\omega=\omega_{\Pi}|_{F^{\times}}$. In Case $(1)$, $\omega=\omega_{\pi}\omega_{\sigma}$; and in Case $(2)$, $\omega=\omega_{\Pi}|_{F^{\times}}$. We have the following well-known fact whose proof we include here for the sake of completeness.
\begin{lemma}
\label{unr}
For $a, b \in F^{\times}$ as above, we have 
\begin{eqnarray}
 \gamma(s,(\pi\otimes\chi) \times \sigma,\chi,\psi')&=&\omega^2(a)\chi^4(a)|a|^{4s-2}\gamma(s,(\pi\otimes\chi) \times \sigma,\chi,\psi) \nonumber \\
 \gamma_{\rm As}(s,\Pi,\chi,\psi',\xi)&=&\omega^2(a)\chi^4(a)|a|^{4s-2}\gamma_{\rm As}(s,\Pi,\chi,\psi,\xi) \nonumber
 \end{eqnarray}

and
\[
  {  \gamma_{\rm As}(s,\Pi,\chi,\psi,\xi')=\omega(b)\chi^{2}(b)|b|^{2s-1}\gamma_{\rm As}(s,\Pi,\chi,\psi,\xi). }
\]
\end{lemma}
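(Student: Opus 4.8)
The plan is to track how each ingredient in the local functional equation \eqref{lfe} transforms under the substitutions $\psi'(x)=\psi(ax)$ and $\xi'=b\xi$, and then compare the two resulting functional equations to read off the ratio of $\gamma$-factors. First I would record the two basic transformation rules. (i) For the Fourier transform: if $\widehat{\Phi}$ denotes the transform relative to $\psi$, then the transform relative to $\psi'$ is $\widehat{\Phi}_{\psi'}(y)=|a|_F\,\widehat{\Phi}(ay)$, because changing $\psi$ to $\psi(a\,\cdot)$ rescales the self-dual measure by $|a|_F$ and dilates the argument. (ii) For the Whittaker model: if $W\in\W(\Pi,\psi_E)$ then $g\mapsto W(t(a,1)^{-1}g)$ — or, in Case (1), the pair $(W_1(t(a,1)^{-1}\,\cdot),W_2(t(a,1)^{-1}\,\cdot))$ — lies in $\W(\Pi,\psi'_E)$, since conjugating $u(x)$ by $t(a,1)$ scales $x$ by $a$ (here one uses that $\psi_E(x)=\psi(\mathrm{tr}_{E/F}(\xi x))$, so replacing $\psi$ by $\psi(a\cdot)$ is the same as replacing the Whittaker vector by its $t(a,1)$-translate). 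Correspondingly the dual Whittaker function $\widetilde W$ transforms through $w\,{}^tg^{-1}$.

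Next I would substitute these into $Z(s,W,\Phi,\chi)$. Because the integral is over $\U(F)\backslash\G(F)$, replacing $W$ by $W(t(a,1)^{-1}g)$ can be absorbed by the change of variables $g\mapsto t(a,1)g$, which introduces a Jacobian-type factor $\chi(\det t(a,1))|\det t(a,1)|_F^s=\chi(a)|a|_F^s$ from the $\chi(\det g)|\det g|_F^s$ weight, together with the factor $\Phi(e_2\,t(a,1)g)=\Phi(e_2g)$ (since $e_2t(a,1)=e_2$), so the zeta integral for the $\psi'$-data equals $\chi(a)|a|_F^{s}$ times $Z(s,W,\Phi,\chi)$. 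For the dual side one does the analogous computation with $\widehat\Phi_{\psi'}(y)=|a|_F\widehat\Phi(ay)$ and $\widetilde W$ built from $\psi'$; combining the rescaling of $\widehat\Phi$, the translation of $\widetilde W$, and a change of variables $g\mapsto t(a^{-1},1)\,g$ (now with weight $\chi^{-1}(\det g)|\det g|_F^{1-s}$), together with the contribution of $w$ and transpose-inverse on the $\widetilde W$ variable, one gets a total monomial factor times $Z(1-s,\widetilde W,\widehat\Phi,\chi^{-1})$. Dividing the two functional equations then expresses $\gamma(s,\cdot,\psi')/\gamma(s,\cdot,\psi)$ as this monomial in $|a|_F$, $\chi(a)$ and $\omega(a)$; the exponents $4s-2$, $\chi^4(a)$, $\omega^2(a)$ come out because both $W$ and $\widehat\Phi$ (equivalently, both ``copies'' of $\GL_2$ inside $\GL_2(E\otimes F)$) contribute, so each elementary exponent gets doubled relative to the $\GL_2$ Rankin–Selberg case. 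For the $\xi'=b\xi$ statement one argues similarly but more cheaply: changing $\xi$ to $b\xi$ changes $\psi_E$ to $\psi_E(b\,\cdot)$ \emph{only on the $E$-side}, which corresponds to a single $t(b,1)$-translation inside $\GL_2(E)$ (not a diagonal one), producing the factor $\omega(b)\chi^{2}(b)|b|_F^{2s-1}$ — half the exponents of the $\psi$-case, reflecting that only one of the two Fourier-theoretic ingredients is affected.

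The step I expect to be the main obstacle is bookkeeping the transpose-inverse and the long Weyl element $w$ in the dual zeta integral: one must check that conjugating $\widetilde W(g)=W(w\,{}^tg^{-1})$ by $t(a,1)$ indeed reproduces the claimed power of $|a|_F$ after the change of variables, keeping careful track of which character ($\chi$ versus $\chi^{-1}$, $\omega$ versus $\omega^{-1}$) and which power of $s$ versus $1-s$ each factor carries, and of the extra $|a|_F$ from the self-dual measure hidden in $\widehat\Phi_{\psi'}$. Once the two functional equations are written side by side the conclusion is immediate, and the central-character normalization $\omega=\omega_\Pi|_{F^\times}$ (with $\omega=\omega_\pi\omega_\sigma$ in Case (1)) is exactly what lets one rewrite leftover $\omega_\Pi$-values in terms of $\omega$. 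I would present Case (1) and Case (2) in parallel, since the computation is formally identical once $\psi_E$ and the diagonal embedding $\G(F)\hookrightarrow\G(E\otimes F)$ are in place.
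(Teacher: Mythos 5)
Your proposal follows the same route as the paper: translate the Whittaker vector by $t(a,1)$ to pass from $\W(\Pi,\psi_E)$ to $\W(\Pi,\psi'_E)$, record how $Z(s,\cdot)$, $\widehat{\Phi}$, and $Z(1-s,\cdot)$ transform, and divide the two functional equations. Two bookkeeping points deserve care. First, the correct direction is $W\mapsto W(t(a,1)\,\cdot)$, not $W(t(a,1)^{-1}\,\cdot)$: since $W(t(a,1)u(x)g)=\psi_E(ax)\,W(t(a,1)g)$, the translate $W(t(a,1)\,\cdot)$ is the one lying in $\W(\Pi,\psi'_E)$ with $\psi'(x)=\psi(ax)$; as written, your $W(t(a,1)^{-1}\,\cdot)$ lies in $\W(\Pi,\psi_E(a^{-1}\,\cdot))$, which is mismatched with $\widehat{\Phi}_{\psi'}$ and would make the paired functional equation compute $\gamma$ for a third character, not for $\psi'$. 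Second, the change of variables $g\mapsto t(a,1)g$ on $\U(F)\backslash\G(F)$ is \emph{not} measure-preserving; it carries a Jacobian $\delta_{\B}^{-1}(t(a,1))=|a|_F^{-1}$, which you omit when you quote the factor $\chi(a)|a|_F^{s}$. The paper's actual identity is $Z(s,W(t(a,1)\,\cdot),\Phi,\chi)=\chi^{-1}(a)|a|_F^{1-s}Z(s,W,\Phi,\chi)$. As it happens, this Jacobian appears symmetrically on the $Z(s)$ and $Z(1-s)$ sides and so cancels in the ratio, which is why the final exponents $\omega^2(a)\chi^4(a)|a|^{4s-2}$ are still correct under your heuristic; but the intermediate identities as you state them are each off by $|a|_F^{\pm 1}$ and should be corrected before presenting the argument.
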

\begin{proof}
We give the proof in the non-split case. A similar proof works in the split case as well. Let $W'(g)=W(t(a,1)g)$, then one checks that $W'\in {\mathcal W}(\Pi,\psi'_E)$ and the map $W\mapsto W'$ is a bijection from ${\mathcal W}(\Pi,\psi_E)$ to ${\mathcal W}(\Pi,\psi'_E)$. It is easy to see that 
\[
\begin{split}
    Z(s,W',\Phi,\chi)&=\int_{\U(F) \backslash \G(F)} W'(g)\Phi(e_2g) \chi(\mathrm{det}(g)) |\mathrm{det}(g)|^s_F\; dg\\
    &=\chi^{-1}(a)|a|_F^{1-s}Z(s,W,\Phi,\chi).
 \end{split}   
\]
For $\Phi \in C_c^{\infty}(F^2)$, if $\widehat{\Phi}_{\psi'}$ denotes the Fourier transform with respect to $\psi'$, then 
\[
\widehat{\Phi}_{\psi'}(x,y)=|a|_F\widehat{\Phi}_{\psi}(ax,ay).
\]
Thus (with $\widehat{\Phi}=\widehat{\Phi}_{\psi}$)
\[
\begin{split}
  &Z(1-s,\widetilde{W'},\widehat{\Phi}_{\psi'})\\
  &=\omega(a)|a|_F\int_{\U(F) \backslash \G(F)} \widetilde{W}(t(a,1)g)\widehat{\Phi}(e_2t(a,a)g) \chi^{-1}(\mathrm{det}(g)) |\mathrm{det}(g)|^{1-s}_F\; dg\\
  &=\omega^2(a)\chi^2(a)|a|_F^{2s-1} \int_{\U(F) \backslash \G(F)} \widetilde{W}(t(a,1)g)\widehat{\Phi}(e_2g) \chi^{-1}(\mathrm{det}(g)) |\mathrm{det}(g)|^{1-s}_F\; dg\\
  &=\omega^2(a)\chi^3(a)|a|_F^{3s-1}Z(1-s,\widetilde{W},\widehat{\Phi}).\\
 \end{split}   
\]
The first assertion now follows from the local functional equation (\ref{lfe}). A similar calculation works for the second assertion. One only needs to observe that, changing $\xi\mapsto \xi'$ but keeping $\psi$ unchanged, alters the additive character with respect to which the Whittaker model is considered but has no effect on the Fourier transform. 
\end{proof}

Henceforth, we fix a $\psi=\psi_F$ so that it is unramified, and also fix a choice of $\xi$ in Case (2) so that the corresponding $\psi_E$ is also unramified and we suppress $\xi$ in the subsequent notation. Thus we write $\gamma(s,\Pi,\chi,\psi)$ to denote 
\[
\gamma(s,\Pi,\chi,\psi)=\left\{\begin{array}{cc}\gamma(s,(\pi\otimes\chi)\times\sigma,\psi)&\mbox{ in Case (1)}\\\gamma_{\rm As}(s,\Pi,\chi,\psi)&\mbox{ in Case (2).}\end{array}\right.
\]

\subsection{Howe vectors and partial Bessel functions} 
\label{sec-howe}

 We review the theory of Howe vectors \cite{H} for $\GL_2$ over a local field $F$ which was subsequently studied by Baruch \cite{Bar1} in a more general context. Let $q$ denote  the cardinality of the residue field. Let $\psi$ be an additive character of $F$ of conductor $0$, i.e., $\psi$ is trivial on $\mathfrak{o}_F$ while $\psi|_{{\p}^{-1}_F}\neq 1$. Suppose $(\pi,V)$ is an irreducible admissible representation of $\GL_2(F)$ with the associated Whittaker model ${\mathcal W}=\W(\pi,\psi)$ relative to $\psi$.

 \par
 For $m\geq 1$, let ${\rm K}_m$ be the $m$-th congruence subgroup, i.e., ${\rm K}_m={\rm I}+{\rm M}_2(\p_F^m)$. On ${\rm K}_m$, define the function $\tau_m$ by the formula
\[
\tau_{m}(k)=\psi(\varpi_F^{-2m}k_{1,2}), 
\]
where $(k_{i,j})$ is the matrix of $k$. One checks that $\tau_m$ is a linear (unitary) character of ${\rm K}_m$, trivial on ${\rm K}_{2m}$. 
Put 
\[
d_m=\begin{pmatrix}\varpi_F^{-2m}&\\&{1}\end{pmatrix},
\]
and let $\J_m=d_m{\rm K}_md_m^{-1}$. Then $\J_m$ is given by
\[
   \J_m=\begin{pmatrix} 1+{\p}_F^m          & {\p}_F^{-m}       \\
                                      {\p}_F^{3m}       & 1+{\p}_F^m        \\
                                    \end{pmatrix}.
\]
Define a character $\varphi_m$ of $\J_m$ by $\varphi_m(j)=\tau_m(d_m^{-1}jd_m)$, $j\in \J_m$.

\par
It is well-known (cf. \cite{BuHe}) that ${\rm K}_m$ is {\it decomposed} with respect to $\B$, i.e., the product map 
\[
{\rm K}_m\cap {\U}(F)\times {\rm K}_m\cap {\T}(F)\times {\rm K}_m\cap {\overline{\U}}(F)\longrightarrow {\rm K}_m
\]
is a bijection, in fact, a homeomorphism of topological spaces. Since $d_m$ is diagonal, it follows that the group ${\rm J}_m$ is also decomposed with respect to $\B$. Further, 
\[
d_m({\rm K}_m\cap {\U}(F))d_m^{-1}\supset {\rm K}_m\cap {\U}(F) \quad \text{and} \quad d_m({\rm K}_m\cap {\overline{\U}}(F))d_m^{-1}\subset {\rm K}_m\cap {\overline{\U}}(F),
\]
i.e., conjugation by $d_m$ enlarges the {\it upper} part ${\rm K}_m\cap {\U}(F)$ of ${\rm K}_m$ while shrinking its {\it lower} part ${\rm K}_m\cap {\overline{\U}}(F)$. 
Let us put $\J_m^{\mathscr u}={\J}_m\cap \U(F)$ and $\J^{\mathscr l}_m={\J}_m\cap \overline{\U}(F)$. One checks that $\varphi_m$ and $\psi$ both agree on ${\rm J}^{\mathscr u}_m$.

\par
For $m\geq 1$, consider the function $f_m$ in the Hecke algebra of $\G(F)$ given by 
\[
f_m=\left\{\begin{array}{cc}
\frac{1}{\rm{vol}(\J_m)}\varphi_m^{-1}&\text{ on }\J_m\\ 0&\text{ outside of }\J_m\end{array}\right..
\]
For $v$ in $V$, put $v_m=\lambda(f_m)v$. Explicitly, $v_m=\frac{1}{\rm{vol}(\J_m)}\int\limits_{\J_m}\varphi_m^{-1}(j)(\lambda(j)v)dj$. 
\begin{defn}
Fix a $v$ satisfying $\lambda(v)=1$ and let $l$ be an integer so that ${\rm K}_{l}$ fixes $v$. Then the vector $v_m$ is called a Howe vector of $\pi$ if $m\geq l$.  
\end{defn}
An important property of Howe vector $v_m$ is that it is also given by the formula 
\[
v_m=\frac{1}{\rm{vol}(\J^{\mathscr u}_m)}\int\limits_{\J^{\mathscr u}_m}(\lambda(u)v)\psi^{-1}(u)du.
\]
To see this, one has to utilize the decomposition $\J_m=\J_m^{\mathscr u}\cdot ({\J_m}\cap\overline{\B}(F))$ and observe that $\J_m\cap \overline{\B}(F)\subset {\rm K}_m$ fixes $v$ for $m\geq l$. For $W\in {\mathcal W}$, we define $W_m\in {\mathcal W}$ by 
\begin{equation}\label{howe-whitt}
W_m(g)=\frac{1}{\text{vol}(\J^{\mathscr u}_m)}\int\limits_{\J^{\mathscr u}_m}W(gu)\psi^{-1}(u)du.
\end{equation}
Note, if $W=W_v$ and $v_m$ is a Howe vector, then $W_m=W_{v_m}$ is the corresponding Howe Whittaker function. We collect  the important properties of the functions $W_m$ in the following Lemma. (cf. \cite[Lemma 5.2]{Bar2}).
\begin {lemma}
\label{Howe}
Choose $W\in {\mathcal W}$ so that $W({\rm I})=1$. Let $l$ be such that $\rho({\rm K}_l)W=W$, where $\rho$ denotes right translation. Then we have
\begin{enumerate}[label=$(\arabic*)$]
\item\label{Howe-item1} $W_m({\rm I})=1$;
\item\label{Howe-item2} If $m \geq l$, then $W_m(gj)=\varphi_m(j)W_m(g)$ for all $j \in \J_m$;
\item\label{Howe-item3} If $m \geq k$, then
\[
  W_m(g)=\frac{1}{\mathrm{vol}(\J^{\mathscr u}_m)} \int_{\J^{\mathscr u}_m}  W_k(gu) \psi^{-1}(u) du.
\]
\end{enumerate}
\end{lemma}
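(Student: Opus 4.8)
The statement to prove is Lemma~\ref{Howe}, establishing the three basic properties of the Howe Whittaker functions $W_m$ defined by the integral formula (\ref{howe-whitt}).

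\medskip

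The plan is to verify the three items in order, each being a direct computation with the defining integral. For item~\ref{Howe-item1}, I would start from $W_m({\rm I})=\frac{1}{\mathrm{vol}(\J^{\mathscr u}_m)}\int_{\J^{\mathscr u}_m}W(u)\psi^{-1}(u)\,du$ and use that $W$ is a Whittaker function: $W(u(x))=\psi(x)W({\rm I})=\psi(x)$ for $u(x)\in\J^{\mathscr u}_m$. Since $\varphi_m$ and $\psi$ agree on $\J^{\mathscr u}_m$ (as noted just before the Lemma), and $\psi^{-1}(u)\psi(x)=1$, the integrand is identically $1$, so the integral is $\mathrm{vol}(\J^{\mathscr u}_m)$ and $W_m({\rm I})=1$. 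For item~\ref{Howe-item2}, I would first reduce to the statement that, for $m\geq l$, the Howe vector formula $v_m=\frac{1}{\mathrm{vol}(\J^{\mathscr u}_m)}\int_{\J^{\mathscr u}_m}(\lambda(u)v)\psi^{-1}(u)\,du$ holds — this is the paragraph of text before the Lemma, using $\J_m=\J^{\mathscr u}_m\cdot(\J_m\cap\overline{\B}(F))$ together with $\J_m\cap\overline{\B}(F)\subset{\rm K}_m$ fixing $v$ — and then that $v_m$ transforms under $\J_m$ by $\varphi_m$, which is the statement $f_m*$ (convolution) lands in the $\varphi_m$-isotypic line, i.e. $\lambda(j)v_m = \varphi_m(j)v_m$ for $j\in\J_m$ because $\tau_m$ (equivalently $\varphi_m$) is a character of $\J_m$; translating back through $W_m(g)=\lambda(\rho(g)\,\cdot\,)$ gives $W_m(gj)=\varphi_m(j)W_m(g)$. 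The one point to be careful about is that (\ref{howe-whitt}) defines $W_m$ for an arbitrary $W$, not necessarily of the form $W_v$ with $v$ a Howe vector; but the same manipulation — writing the integral over $\J^{\mathscr u}_m$, inserting the $\overline{\B}(F)$-part which is absorbed by ${\rm K}_m$-invariance of $W$ when $m\geq l$ — works directly at the level of Whittaker functions.

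\medskip

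For item~\ref{Howe-item3}, the content is a semigroup/idempotent-type identity: applying the $\J^{\mathscr u}_k$-averaging followed by the $\J^{\mathscr u}_m$-averaging with $m\geq k$ recovers the single $\J^{\mathscr u}_m$-averaging. Concretely, I would substitute the definition of $W_k$ into the right-hand side, obtaining a double integral over $\J^{\mathscr u}_k\times\J^{\mathscr u}_m$ of $W(guu')\psi^{-1}(u)\psi^{-1}(u')$; since $\J^{\mathscr u}_k\subset\J^{\mathscr u}_m$ are both subgroups of $\U(F)$ (recall $\J^{\mathscr u}_r=\J_r\cap\U(F)$ corresponds to $x\in\p_F^{-r}$, an increasing family in $r$), I would change variables $u''=uu'$ and use that $\psi^{-1}$ is a character, so the inner integral over $u$ just contributes $\mathrm{vol}(\J^{\mathscr u}_k)$ after the Fubini rearrangement, and the normalization constants cancel to leave exactly $W_m(g)$. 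Equivalently, one notes $W_k$ already satisfies the $\psi$-equivariance $W_k(gu)=\psi(x)W_k(g)$ for $u=u(x)\in\J^{\mathscr u}_k$, so the $\J^{\mathscr u}_k$-part of the $\J^{\mathscr u}_m$-integral is redundant.

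\medskip

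None of the three steps presents a serious obstacle; the only mild subtlety — and the step I would be most careful with — is item~\ref{Howe-item2} for a general $W$ (rather than a Howe vector), where one must justify replacing the $\J^{\mathscr u}_m$-integral by the full $\J_m$-integral against $\varphi_m^{-1}$. This rests on the Iwahori-type factorization $\J_m=\J^{\mathscr u}_m\cdot(\J_m\cap\overline{\B}(F))$ (valid because $\J_m$ is decomposed with respect to $\B$, as recalled in the text), on $\J_m\cap\overline{\B}(F)\subset{\rm K}_m$, and on $\rho({\rm K}_l)W=W$ with $m\geq l$; once $W_m(g)=\frac{1}{\mathrm{vol}(\J_m)}\int_{\J_m}W(gj)\varphi_m^{-1}(j)\,dj$ is available, the transformation law $W_m(gj_0)=\varphi_m(j_0)W_m(g)$ is immediate from the right-invariance of Haar measure on $\J_m$ and the fact that $\varphi_m$ is a character. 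I would organize the write-up so that this factorization is established once and then reused for both item~\ref{Howe-item2} and item~\ref{Howe-item3}.
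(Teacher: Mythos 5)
Your proposal is correct, and all three items are handled by the standard arguments. The paper itself does not prove Lemma~\ref{Howe}; it defers to Baruch's work (the parenthetical ``cf.\ \cite[Lemma 5.2]{Bar2}''), so there is no in-paper proof against which to compare. Your reconstruction is the usual one: for \ref{Howe-item1}, the Whittaker equivariance $W(u(x))=\psi(x)$ cancels against $\psi^{-1}(u)$ to make the integrand constant; for \ref{Howe-item2}, the key reduction is precisely the passage from the $\J^{\mathscr u}_m$-average against $\psi^{-1}$ to the full $\J_m$-average against $\varphi_m^{-1}$ via the decomposed-group factorization $\J_m=\J^{\mathscr u}_m\cdot(\J_m\cap\overline{\B}(F))$ and ${\rm K}_l$-fixedness of $W$ for $m\ge l$, after which the transformation law is an immediate change of variables in the $\J_m$-integral since $\varphi_m$ is a character; and for \ref{Howe-item3}, the Fubini rearrangement over $\J^{\mathscr u}_k\times\J^{\mathscr u}_m$ with the substitution $y=x+x'$ (using $\p_F^{-k}\subset\p_F^{-m}$ and that $\psi$ is a character of $F$) collapses the inner integral to a volume factor that cancels the normalization. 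Two small remarks worth keeping in mind when you write this up: in \ref{Howe-item1} the mention of $\varphi_m$ is superfluous --- you only need $\psi$ there; and in \ref{Howe-item3} your first (Fubini) argument is preferable to the alternative remark, because the alternative invokes the $\J^{\mathscr u}_k$-equivariance of $W_k$, which rests on item \ref{Howe-item2} applied with $m=k$ and thus implicitly requires $k\ge l$, whereas the statement of \ref{Howe-item3} imposes no such lower bound on $k$.
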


We also have the following Lemma (cf. \cite[Lemma 3.2]{ChaiZhang}) concerning the support of $W_m$ on the diagonal torus.
\begin{lemma}
\label{a-support}
Let $m\geq l$ and $W_m$ be as in Lemma~\ref{Howe}. Then we have
\begin{enumerate}[label=$(\arabic*)$]
\item\label{a-support-item1} $W_m(t(a,1))\neq 0 \iff a \in 1+{\p}^{m}_F$.
\item\label{a-support-item2} If $W_m(t(a,1)w) \neq 0$, then $a \in {\p}^{-3m}_F$.
\end{enumerate}

\end{lemma}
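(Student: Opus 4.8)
The statement to prove is Lemma~\ref{a-support}, describing the support of the Howe Whittaker function $W_m$ on $t(a,1)$ and on $t(a,1)w$. Here is how I would proceed.

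\medskip

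\textbf{Part (1).} The plan is to exploit the transformation property of $W_m$ under the ``upper'' part $\J_m^{\mathscr u} = \J_m \cap \U(F)$ together with the Whittaker equivariance. Recall $\J_m^{\mathscr u}$ consists of matrices $u(x)$ with $x \in {\p}_F^{-m}$, and that $\varphi_m$ restricted to $\J_m^{\mathscr u}$ agrees with $\psi$. On the other hand, by Lemma~\ref{Howe}~\ref{Howe-item2}, for $m \geq l$ we have $W_m(t(a,1) j) = \varphi_m(j) W_m(t(a,1))$ for $j \in \J_m$; I would want to conjugate a fixed $u(x) \in \J_m^{\mathscr u}$ across $t(a,1)$. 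Since $t(a,1) u(x) = u(ax) t(a,1)$, and $W_m$ is a Whittaker function (so $W_m(u(y) g) = \psi(y) W_m(g)$), we get $\psi(ax) W_m(t(a,1)) = W_m(t(a,1) u(x)) = \varphi_m(u(x)) W_m(t(a,1)) = \psi(x) W_m(t(a,1))$ for all $x \in {\p}_F^{-m}$. Hence if $W_m(t(a,1)) \neq 0$ then $\psi((a-1)x) = 1$ for all $x \in {\p}_F^{-m}$, which by the conductor-zero hypothesis on $\psi$ forces $(a-1){\p}_F^{-m} \subseteq \o_F$, i.e. $a - 1 \in {\p}_F^m$, i.e. $a \in 1 + {\p}_F^m$. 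For the converse, I would use property \ref{Howe-item1}, namely $W_m({\rm I}) = 1$, together with the fact that $t(a,1) \in {\rm K}_m$ and even lies in $\J_m$ when $a \in 1+{\p}_F^m$ (it is diagonal, hence in the torus part of the decomposition of $\J_m$), so $W_m(t(a,1)) = \varphi_m(t(a,1)) W_m({\rm I}) = W_m({\rm I}) = 1 \neq 0$, using that $\varphi_m$ is trivial on the diagonal part of $\J_m$.

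\medskip

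\textbf{Part (2).} Here I would run the same conjugation trick but across $t(a,1)w$. For $u(x) \in \J_m^{\mathscr u}$, one computes $w u(x) w^{-1} = \overline{u}(x)$, so $t(a,1) w \, u(x) = t(a,1) \overline{u}(x) w = \overline{u}(x/a) t(a,1) w$ — wait, more carefully: $t(a,1)\overline u(x) = \overline u(x a^{-1}) t(a,1)$ is not quite it either; let me instead conjugate in the other direction. I would write $j \, t(a,1) w$ for a suitable $j$, or better: use that $u(x) t(a,1) w = t(a,1) w \cdot (w^{-1} t(a,1)^{-1} u(x) t(a,1) w)$, and check that for $x$ in a suitable lattice the conjugate lies in $\J_m$. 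The cleanest route: pick $\overline u(y) \in \J_m^{\mathscr l}$; then $t(a,1) w \, \overline u(y)$ equals, after moving $\overline u(y)$ to the left of $w$ (which turns it into an upper unipotent) and then past $t(a,1)$, a translate of the form $u(\ast) \cdot t(a,1) w$ with the $\ast$ proportional to $a y$; since $W_m$ transforms by $\psi$ on the left under $\U(F)$ and by $\varphi_m$ on the right under $\J_m$, comparing the two sides gives $\psi(c \, a y) = \varphi_m(\overline u(y))$ for $y \in {\p}_F^{3m}$, where $c$ is an explicit constant. Since $\varphi_m$ is a character of $\J_m$ that need not be trivial on $\J_m^{\mathscr l}$, I would need its explicit value: $\varphi_m(j) = \tau_m(d_m^{-1} j d_m) = \psi(\varpi_F^{-2m} (d_m^{-1} j d_m)_{1,2})$, and for $j = \overline u(y)$ with $y \in {\p}_F^{3m}$ one has $d_m^{-1} \overline u(y) d_m = \overline u(\varpi_F^{2m} y)$, whose $(1,2)$ entry is $0$, so in fact $\varphi_m$ \emph{is} trivial on $\J_m^{\mathscr l}$. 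Thus the identity reads $\psi(c \, a y) = 1$ for all $y \in {\p}_F^{3m}$, forcing $a \in {\p}_F^{-3m}$ times a unit-constant lattice, i.e. $a \in {\p}_F^{-3m}$ after absorbing the constant $c$ (which is a unit since $\psi$ is unramified).

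\medskip

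\textbf{Main obstacle.} The delicate point is the bookkeeping in Part (2): keeping track of exactly which lattice the relevant unipotent conjugate lands in, and the precise exponent ($3m$, matching the lower-left corner ${\p}_F^{3m}$ of $\J_m$), so that the resulting character condition on $\psi$ produces the exponent $-3m$ and not something weaker. One must also be careful that $t(a,1) w$ genuinely lies in the big Bruhat cell and that the matrix identities are valid over the algebra $E \otimes F$ in Case (2) as well as over $F$; but since the Howe-vector constructions in this subsection are carried out over a single local field $F$ (the representation $\pi$ there is of $\GL_2(F)$), this is not an issue for the lemma as stated. I expect Part (1) to be essentially immediate from Lemma~\ref{Howe}, with all the real work concentrated in the conjugation computation for Part (2).
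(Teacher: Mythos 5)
Your proof is correct and follows essentially the same route as the paper: in both parts you conjugate the appropriate unipotent element ($u(x)$ with $x\in\p_F^{-m}$ for (1), $\overline{u}(y)$ with $y\in\p_F^{3m}$ for (2)) across the torus element, compare the left Whittaker equivariance with the right $\varphi_m$-equivariance from Lemma~\ref{Howe}\ref{Howe-item2}, and read off the resulting lattice condition on $a$ from the triviality of $\psi$ on $\o_F$. One cosmetic slip: $d_m^{-1}\overline{u}(y)d_m = \overline{u}(\varpi_F^{-2m}y)$, not $\overline{u}(\varpi_F^{2m}y)$ — but this does not affect the conclusion, since the $(1,2)$ entry is $0$ either way, so $\varphi_m$ is indeed trivial on $\J_m^{\mathscr l}$ as you claim.
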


\begin{proof}
For $x \in \mathfrak{p}_F^{-m}$ we have $u(x) \in \J^{\mathscr u}_m$. From the relation
\[
  \begin{pmatrix} a & \\ & 1 \end{pmatrix}\begin{pmatrix} 1 & x \\ & 1 \end{pmatrix}=\begin{pmatrix} 1 & ax \\ & 1 \end{pmatrix} \begin{pmatrix} a &  \\ & 1 \end{pmatrix},
\]
we obtain $\psi(x) W_m(t(a,1))=\psi(ax)W_m(t(a,1))$. If $a \notin 1+\mathfrak{p}_F^m$, we have $(1-a)x \notin \o_F \subset \mathrm{ker}(\psi)$ for some $x \in \mathfrak{p}_F^{-m}$. Thus we get that $W_m(t(a,1))=0$. If $a \in 1+\mathfrak{p}_F^m$, then $t(a,1) \in  \J^{\mathscr u}_m$. Our result follows from Lemma \ref{Howe} that $W_m(t(a,1))=\varphi_m(t(a,1))W_m({\rm I})=\varphi_m(0)=1$.

\par
To see property \ref{a-support-item2}, for $x \in \mathfrak{p}_F^{3m}$, it is easy to see that $\overline{u}(x) \in \J^{\mathscr l}_m$. Using
\[
 \begin{pmatrix} a & \\ & 1 \end{pmatrix} w \begin{pmatrix} 1 &  \\ x & 1 \end{pmatrix}=\begin{pmatrix} 1 & ax \\ & 1 \end{pmatrix} \begin{pmatrix} a &  \\ & 1 \end{pmatrix} w,
\]
we observe that $W_m(t(a,1)w)=\psi(ax)W_m(t(a,1)w)$. The conclusion follows from the argument similar to that in property \ref{a-support-item1}.
\end{proof}

By virtue of Lemma \ref{Howe}, $W_m$ is what one calls a {\it partial Bessel function}, in the sense that, 
\[
  W_m(u_1gu_2)=\psi(u_1)\psi(u_2)W_m(g), u_1 \in \U(F), u_2 \in \J_m^{\mathscr u}, g \in \GL_2(F). 
\]
We use the notation $B_m(g,W)=W_m(g)$, $g \in \G(F)$. The main result that we need regarding these partial Bessel functions is \cite[Lemma 3.6]{ChaiZhang} (slightly paraphrased here):
\begin{prop}
\label{Big Cell-GL(2)}
Let $\pi$ and $\sigma$ be irreducible admissible representation of $\GL_2(F)$ with the same central character. Let $l$ be an integer so that $W_1 \in \mathcal{W}(\pi,\psi)$ and $W_2 \in \mathcal{W}(\sigma,\psi)$ are Whittaker functions fixed by ${\rm K}_l$. Assume $W_{i}({\rm I})=1, i=1,2$. Then, for $m \geq 3l$, the function given by the difference 
\[
  B_m(g,W_1)-B_m(g,W_2)
\]
is supported on $g \in \B(F) w \J^{\mathscr u}_m$. 
\end{prop}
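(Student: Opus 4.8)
The plan is to reduce the statement to a claim about where the difference $B_m(g,W_1)-B_m(g,W_2)$ can be nonzero by exploiting the Bruhat decomposition $\G(F)=\B(F)\cup \B(F)w\U(F)$ together with the equivariance properties in Lemma~\ref{Howe}. Since both $B_m(\cdot,W_i)$ transform on the right under $\J_m^{\mathscr u}$ by $\psi$ and on the left under $\U(F)$ by $\psi$, and take the value $1$ at the identity, it suffices to understand their values on a set of double coset representatives for $\U(F)\backslash \G(F)/\J_m^{\mathscr u}$. On the big cell $\B(F)w\U(F)$ there is nothing to prove --- the assertion is precisely that the difference is supported there together with the part of the small cell coming from $\J_m^{\mathscr u}$ itself --- so the real content is to show that on the complement, i.e.\ on the torus part $\T(F)$ of the small cell $\B(F)$ away from $\J_m^{\mathscr u}$, the two partial Bessel functions \emph{agree}. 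By the left $\U(F)$-equivariance and right $\J_m^{\mathscr u}$-equivariance, and writing $t(a,b)=t(ab^{-1},1)\,t(b,b)$ with $t(b,b)$ central, the central characters being equal lets me reduce to comparing $B_m(t(a,1),W_1)$ and $B_m(t(a,1),W_2)$ for $a\in F^\times$.

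Next I would invoke Lemma~\ref{a-support}\ref{a-support-item1}: for $m\geq l$ we have $B_m(t(a,1),W_i)\neq 0$ only when $a\in 1+\p_F^m$, and in that case $t(a,1)\in \J_m^{\mathscr u}$ and $B_m(t(a,1),W_i)=\varphi_m(t(a,1))W_m(\mathrm I)=1$ by Lemma~\ref{Howe}\ref{Howe-item2} and \ref{Howe-item1}. Hence on the torus the two functions coincide (both equal to the same function of $a$, namely the indicator of $1+\p_F^m$), so their difference already vanishes on $\T(F)$ --- and more precisely is supported on $\T(F)\cap \J_m^{\mathscr u}$, which sits inside $\B(F)w\J_m^{\mathscr u}$ only in the degenerate sense; I should phrase the target support set as $\B(F)w\J_m^{\mathscr u}$ understood to contain the relevant small-cell piece. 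The point is that away from that tiny neighborhood of the identity inside $\B(F)$, each $B_m(\cdot,W_i)$ individually vanishes, so their difference does too.

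The hard part is handling elements of $\B(F)$ that are \emph{not} of the form $u t(a,1) u'$ with $u'\in \J_m^{\mathscr u}$ --- that is, the full Borel $\B(F)$ rather than just its torus times $\J_m^{\mathscr u}$ --- and showing the difference is supported in $\B(F)w\J_m^{\mathscr u}$ there; this is where the hypothesis $m\geq 3l$ (rather than merely $m\geq l$) must enter. Following \cite[Lemma 3.6]{ChaiZhang}, I expect the mechanism to be: a general $b\in \B(F)$ can be moved by left $\U(F)$-translation and right $\J_m^{\mathscr u}$-translation, and when $b$ has a large enough upper-triangular unipotent part relative to $m$, one can produce a right translate landing in $\J_m$ but outside $\J_m^{\mathscr u}$, i.e.\ with nontrivial torus or lower-triangular component; then the equivariance under $\varphi_m$ on $\J_m$ together with the fact that $W_i$ is already ${\rm K}_l$-fixed forces a relation that pins down $B_m(b,W_i)$ in terms of $W_i$ evaluated on a \emph{smaller} group where $W_1$ and $W_2$ need not yet agree --- so one iterates, and the gap between $m$ and $3l$ (the factor $3$ matching the exponent in $\p_F^{3m}$ in the description of $\J_m$, and the exponent $3m$ appearing in Lemma~\ref{a-support}\ref{a-support-item2}) provides exactly the room to run this argument. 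I would carry this out by a careful case analysis on the Iwasawa-type coordinates of $b\in\B(F)$, using Lemma~\ref{Howe}\ref{Howe-item3} to pass between $W_m$ and $W_k$ for $k$ between $l$ and $m$, and showing that in every case either the relevant value is zero for both $i=1,2$, or the element actually lies in $\B(F)w\J_m^{\mathscr u}$. The bookkeeping with the three different exponents $\p_F^m,\p_F^{-m},\p_F^{3m}$ defining $\J_m$, and tracking how conjugation by torus elements $t(a,1)$ rescales these, is the main technical obstacle; everything else is a formal consequence of the Bruhat decomposition and the equivariance properties already established.
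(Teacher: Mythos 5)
Your reduction to the torus and the use of Lemma~\ref{a-support}\ref{a-support-item1} to show that $B_m(\cdot,W_1)$ and $B_m(\cdot,W_2)$ agree on $\T(F)$ (hence, by left $\U(F)$-equivariance, on all of $\B(F)$) is correct. But the organizing sentence ``On the big cell $\B(F)w\U(F)$ there is nothing to prove'' misreads the statement and consequently misplaces the entire difficulty. The set $\B(F)w\J_m^{\mathscr u}$ is a \emph{proper} subset of the big cell $\B(F)w\U(F)$: the claim requires showing that the difference vanishes on $\B(F)w\bigl(\U(F)\setminus\J_m^{\mathscr u}\bigr)$, which lives entirely inside the big cell. (Note also that $\B(F)w\J_m^{\mathscr u}$ contains no small-cell elements at all, so there is no ``small-cell piece'' of the target support set to worry about.) By contrast, the ``hard part'' you describe --- elements of $\B(F)$ not of the form $ut(a,1)u'$ with $u'\in\J_m^{\mathscr u}$ --- is vacuous: every $b\in\B(F)$ has the form $ut(a,1)t(b,b)$ with trivial $u'$, and you already disposed of $\B(F)$ completely. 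So the piece of the proof where $m\geq 3l$ actually enters is never addressed.

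Concretely, the missing argument is as follows. By left $\U(F)$-, right $\J_m^{\mathscr u}$-equivariance and the central character, it suffices to treat $g=t(a,1)wu(x)$ with $x\notin\p_F^{-m}$, equivalently $x^{-1}\in\p_F^{m+1}$. Using $wu(x)=u(x^{-1})\,t(-x^{-1},x)\,\overline u(x^{-1})$ and commuting $t(a,1)$ past $u(x^{-1})$, one reduces to evaluating $W_{i,m}\bigl(t(\ast,\ast)\,\overline u(x^{-1})\bigr)$. Pick $k=\lfloor(m+1)/3\rfloor$; since $m\geq 3l$ one has $k\geq l$, and since $x^{-1}\in\p_F^{m+1}\subset\p_F^{3k}$, the element $\overline u(x^{-1})$ lies in $\J_k^{\mathscr l}$. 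Applying Lemma~\ref{Howe}\ref{Howe-item3} to express $W_{i,m}$ as an average of $W_{i,k}$ over $\J_m^{\mathscr u}$, and then using the Iwasawa decomposition of $\overline u(y)u(z)$ (with $y\in\p_F^{m+1}$, $z\in\p_F^{-m}$, so $1+yz\in 1+\p_F$) to push $\overline u$ to the right into $\J_k^{\mathscr l}$, one finds that each integrand $W_{i,k}(\cdots)$ reduces to a torus value $W_{i,k}(t(\ast,\ast))$ multiplied by factors depending only on $\psi$, $\omega$, $a$, $x$, $z$. By Lemma~\ref{a-support}\ref{a-support-item1} and the equality of central characters this torus value is the same for $i=1$ and $i=2$, so the difference $B_m(g,W_1)-B_m(g,W_2)$ vanishes. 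This is the step where $m\geq 3l$ (matching the exponent $3$ in $\p_F^{3m}$, as you anticipated) is genuinely used, and it is precisely the part of the proof your proposal declares to be trivial.
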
 

We also need the full Bessel function for the representation $\pi$. Let $\{ \J^{\mathscr u}_m \}_{m \geq 0}$ be an exhaustive filtration by compact open subgroup of $\U(F)$. If $W_v \in \mathcal{W}(\pi,\psi)$ then the integral
\[
  \ell(v,x)=\int\limits_{\J_m^{\mathscr u}} W_v \left(  \begin{pmatrix} x &  \\  & 1 \end{pmatrix} w \begin{pmatrix} 1 & u  \\  & 1 \end{pmatrix}   \right) \psi^{-1}(u)  du
\]
converges in the sense that it stabilizes for large $m$ depending on $x$. (See Lemma 4.1of \cite{soudry}.) It defines a Whittaker functional on $V$ for fixed $x \in F$, as does the functional $v \mapsto W_v(e)$. From the uniqueness of Whittaker functionals, it follows that there is the constant proportionality, as a function of $x$ when $\ell(v,x)$ varies, that is,
\[
  j_{\pi}(x) W_v(e)=\int\limits_{\J_m^{\mathscr u}} W_v \left(  \begin{pmatrix} x &  \\  & 1 \end{pmatrix} w \begin{pmatrix} 1 & u  \\  & 1 \end{pmatrix}   \right) \psi^{-1}(u)  du.
\]
The function $j_{\pi}(x)$ is called {\it the Bessel function} (attached to $w$) and its basic properties were studied by Soudry \cite[Lemma 4.2]{soudry} which we recall below. The partial Bessel function introduced above (which is not quasi-invariant on the right under the full unipotent subgroup) is related to the full Bessel function.

\begin{prop} Let $l$ and $W \in \mathcal{W}$ be as in Lemma~\ref{Howe}. 
\label{soudry}
\begin{enumerate}[label=$(\arabic*)$]
\item\label{soudry-item2} For $n\geq \frac{3}{2}l$, we have 
\[
  \mathrm{vol}(\J_{3n}^{\mathscr u}) W_{3n} (t(x,1)w)= j_{\pi}(x) \quad {\text for\; all} \; |x| \leq q^{6n}.
\]
\item\label{soudry-item3}  For any non-negative integer $n$, $x \in \mathfrak{p}_F^{-6n}$ and $a \in 1+\mathfrak{p}_F^{3n}$, we have
\[
 j_{\pi}(xa)=j_{\pi}(x).
\]
\end{enumerate}
\end{prop}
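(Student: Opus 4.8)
The plan is to obtain both parts from the defining formula for the partial Bessel function together with an analysis of the tail of the integral that defines the full Bessel function $j_\pi$.

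For part~\ref{soudry-item2}, I would first observe that \eqref{howe-whitt} gives
\[
  \mathrm{vol}(\J^{\mathscr u}_{3n})\,W_{3n}(t(x,1)w)=\int_{\J^{\mathscr u}_{3n}} W(t(x,1)wu)\,\psi^{-1}(u)\,du ,
\]
while, since $W({\rm I})=1$, the definition of $j_\pi$ gives $j_\pi(x)=\int_{\J^{\mathscr u}_{M}} W(t(x,1)wu)\,\psi^{-1}(u)\,du$ for every sufficiently large $M$. Hence the claim reduces to showing that, under the hypotheses $n\ge\tfrac{3}{2} l$ and $|x|\le q^{6n}$, the tail $\int_{\J^{\mathscr u}_{M}\setminus\J^{\mathscr u}_{3n}} W(t(x,1)wu)\,\psi^{-1}(u)\,du$ vanishes for all $M>3n$. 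Writing $u=u(y)$, this region is $q^{3n}<|y|\le q^{M}$, and there $|y|\ge q^{l}$ (as $3n\ge l$); I would then use the Iwasawa-type identity $t(x,1)wu(y)=u(x/y)\,t(-x/y,y)\,\overline{u}(y^{-1})$, the right ${\rm K}_{l}$-invariance of $W$ (so that $W$ is unchanged by $\overline{u}(y^{-1})\in{\rm K}_{l}$, and $\omega_\pi$ is trivial on $1+\mathfrak{p}_F^{l}$), and the Whittaker transformation law to get $W(t(x,1)wu(y))=\psi(x/y)\,\omega_\pi(y)\,W(t(-x/y^{2},1))$. Next I would split the region into shells $|y|=q^{j}$, $3n<j\le M$, and each shell into cosets of $\mathfrak{p}_F^{-1}$; using $|x|\le q^{6n}$, $j>3n$ and $n\ge\tfrac{3}{2} l$ one checks that $\psi(x/y)$, $\omega_\pi(y)$ and $W(t(-x/y^{2},1))$ are all constant on each such coset, while $\int_{y_0+\mathfrak{p}_F^{-1}}\psi^{-1}(y)\,dy=0$ because $\psi$ is non-trivial on $\mathfrak{p}_F^{-1}$. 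Summing over the shells gives the vanishing of the tail, and hence $\mathrm{vol}(\J^{\mathscr u}_{3n})W_{3n}(t(x,1)w)=j_\pi(x)$ for all $|x|\le q^{6n}$.

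For part~\ref{soudry-item3}, for $n$ in the range $n\ge\tfrac{3}{2} l$ (which is the relevant one) I would derive it directly from part~\ref{soudry-item2}: since $|xa|=|x|\le q^{6n}$, both $j_\pi(x)=\mathrm{vol}(\J^{\mathscr u}_{3n})W_{3n}(t(x,1)w)$ and $j_\pi(xa)=\mathrm{vol}(\J^{\mathscr u}_{3n})W_{3n}(t(xa,1)w)$ hold, and $t(xa,1)w=t(x,1)t(a,1)w=t(x,1)w\,t(1,a)$ with $t(1,a)\in\J_{3n}$ because $a\in1+\mathfrak{p}_F^{3n}$; since $3n\ge l$, part~\ref{Howe-item2} of Lemma~\ref{Howe} gives $W_{3n}(t(x,1)w\,t(1,a))=\varphi_{3n}(t(1,a))W_{3n}(t(x,1)w)=W_{3n}(t(x,1)w)$, as $\varphi_{3n}$ is trivial on diagonal matrices. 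Thus $j_\pi(xa)=j_\pi(x)$; the remaining small values of $n$ are either degenerate or may be taken from \cite[Lemma 4.2]{soudry}.

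The hard part will be the tail estimate in part~\ref{soudry-item2}. One must simultaneously control three local-constancy scales — that of $\psi(x/y)$, that of $W(t(-x/y^{2},1))$ coming from ${\rm K}_{l}$-invariance, and that of the oscillating factor $\psi^{-1}(y)$ — and verify that on every shell $|y|=q^{j}$ with $j>3n$ the amplitude is constant on balls strictly coarser than $\mathfrak{o}_F$, so that integrating $\psi^{-1}(y)$ over such a ball annihilates the contribution. The two hypotheses $|x|\le q^{6n}$ and $n\ge\tfrac{3}{2} l$ are exactly what is needed for this bookkeeping to close.
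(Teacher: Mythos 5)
The paper does not prove this proposition; it states it as a recollection of properties established by Soudry and cites \cite[Lemma 4.2]{soudry}. So there is no in-paper proof to compare against. Your argument is, in substance, a correct reconstruction of the kind of proof Soudry gives, and the key ingredients all check out.

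In part~\ref{soudry-item2}, the reduction to the vanishing of $\int_{q^{3n}<|y|\le q^M} W(t(x,1)wu(y))\psi^{-1}(y)\,dy$ is right, the Bruhat/Iwasawa identity $t(x,1)wu(y)=u(x/y)\,t(-x/y,y)\,\overline{u}(y^{-1})$ is correct (one then splits $t(-x/y,y)=t(y,y)\,t(-x/y^2,1)$ to pull out $\omega_\pi(y)$), and for $|y|=q^j$ with $j\ge 3n+1$ the element $\overline{u}(y^{-1})$ lies in ${\rm K}_l$ because $j\ge l$. On a coset $y_0+\mathfrak{p}_F^{-1}$ (which is contained in a single shell since $j\ge 2$), writing $y=y_0+\epsilon$ with $|\epsilon|\le q$, one has $|x/y - x/y_0|\le q^{6n+1-2j}\le q^{-1}$ so $\psi(x/y)$ is constant; $y/y_0\in 1+\mathfrak{p}_F^{j-1}\subset 1+\mathfrak{p}_F^{l}$ so $\omega_\pi(y)$ is constant; and $(-x/y^2)/(-x/y_0^2)=(y_0/y)^2\in 1+\mathfrak{p}_F^l$, so by right ${\rm K}_l$-invariance $W(t(-x/y^2,1))$ is constant. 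The oscillation $\int_{y_0+\mathfrak{p}_F^{-1}}\psi^{-1}(y)\,dy=0$ then kills each coset, as you say. (Worth noting: your argument actually needs only $3n+1>\max(l,\,3n)$, i.e.\ somewhat less than $n\ge\frac32 l$; the stronger hypothesis in the statement is simply what Soudry records.) In part~\ref{soudry-item3}, the identity $t(xa,1)w=t(x,1)w\,t(1,a)$ with $t(1,a)\in\J_{3n}$ and $\varphi_{3n}(t(1,a))=1$, combined with Lemma~\ref{Howe}\ref{Howe-item2} and part~\ref{soudry-item2}, is exactly right.

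The one genuine gap is your handling of part~\ref{soudry-item3} for $n<\frac32 l$: the statement for a large $n$ does not formally imply the statement for a smaller $n'$, since $1+\mathfrak{p}_F^{3n'}$ is a \emph{larger} group while $\mathfrak{p}_F^{-6n'}$ is a smaller set, so the implication goes the wrong way in the $a$-variable. Your ``either degenerate or from Soudry'' dismissal is not a proof. That said, the paper only ever invokes part~\ref{soudry-item3} with $n=2l\ge\frac32 l$ (in Proposition~\ref{usmth-RS}), so the range you do prove suffices for everything the paper needs, and the paper itself is simply citing Soudry for the full range anyway.
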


\section{\bf Proof of stability of Asai local gamma factor for $\GL_2$}
\label{sec-GL(2)}

Before computing integrals, we fix our choice of Haar measures. If ${\rm H}$ is any of the unimodular groups $\G(F), {\rm A}(F), {\rm Z}(F), \U(F),\overline{\U}(F) \mbox{ or } {\rm K}$, we normalize the Haar measure on ${\rm H}$ so that ${\rm vol}({\rm H}\cap {\rm K})=1$. Since $\U(F), \overline{\U}(F)\simeq F$ and ${\rm A}(F)\simeq F^{\times}$, we may identify the measure on $\U(F)$ and $\overline{\U}(F)$ with the additive measure $dx$ on $F$, and the measure on ${\rm A}(F)$ with the multiplicative measure $d^{\times}a$ on $F^{\times}$. (Likewise, for ${\rm Z}(F)\simeq F^{\times}$.) Our normalization is such that $d^{\times}a$ and $dx$ assigns $\o_F^{\times}$ and $\o_F$, respectively, unit measure. If we use the coordinates $g=u(x)zt(a,1)k$, the Haar measure $dg$ on $\G(F)$ can then be decomposed as
\[
dg=\delta_{\rm B}^{-1}(a)dxd^{\times}zd^{\times}adk,
\]
where $\delta_{\B}$ is the modulus character. 
If $C_c^{\infty}(\G(F))$ denotes the space of smooth functions of compact support on $\G(F)$, then for $f\in C_c^{\infty}(\G(F))$, there is a constant $c=c(\B)$ that only depends on $\B$, so that we have the following integration formula \cite{waldspurger} (also see  \cite[Proposition 8.45]{Knapp})
\begin{equation}
\label{int}
\int\limits_{\G(F)}f(g)dg=c \int\limits_{\U(F)\times \T(F)\times\overline{\U}(F)} f(ut\overline{u})\delta_{\B}(t)^{-1}d\overline{u}dtdu.
\end{equation}
We will use this formula in our calculations. (Of course, one can also normalize the Haar measures so that $c=1$ in the above formula.)

\par

A (smooth) character $\eta$ of $F^{\times}$ is said to be unramified if it is trivial on $\o_F^{\times}$. If $\eta$ is unramified, we set its conductor to be $0$; otherwise, the conductor of $\eta$ is the smallest positive integer $d$ for which $\eta$ is trivial upon restriction to $1+\p_F^{d}$. We write $n(\eta)$ to denote the conductor of $\eta$. For $a\in F$, recall the Gauss sum 
\[
  \tau(\eta,\psi,a)=\int_{\o_F^{\times}} \eta^{-1}(u) \psi^{-1}(au) \; d^{\times} u
\]
of $\eta$ relative to $\psi$. We have the following (cf. \cite[23.6 Remark]{BuHe}, for example):
\begin{lemma}
\label{GaussSum}
For $\eta$ ramified, 
\[
   \int_{\o_F^{\times}} \eta^{-1}(u) \psi^{-1}(\varpi_F^ku) \; d^{\times} u
     \begin{cases}
    =  0   & \quad \text{if}\quad k \neq -n(\eta)\\
    \neq 0  & \quad \text{if}\quad k=-n(\eta).
  \end{cases}
    \]
\end{lemma}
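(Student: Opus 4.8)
The plan is to break the computation into the three regimes $k>-n(\eta)$, $k<-n(\eta)$, and $k=-n(\eta)$. In the first two the integral vanishes for the ``soft'' reason that one is integrating a nontrivial character of a compact abelian group against the trivial character; the real content of the statement is the nonvanishing at $k=-n(\eta)$, which is the classical nonvanishing of an abelian Gauss sum. Throughout set $d=n(\eta)\ge 1$ and $I_k=\int_{\o_F^{\times}}\eta^{-1}(u)\psi^{-1}(\varpi_F^{k}u)\,d^{\times}u$, and recall that $\psi$ has conductor $0$, so it is trivial on $\o_F$ and nontrivial on $\p_F^{j}$ for every $j\le -1$.

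\emph{The case $k>-d$.} If $k\ge 0$ then $\psi^{-1}(\varpi_F^{k}u)=1$ for $u\in\o_F^{\times}$, so $I_k=\int_{\o_F^{\times}}\eta^{-1}(u)\,d^{\times}u=0$ because $\eta$ is ramified. If $-d<k\le -1$, choose $v\in 1+\p_F^{-k}$; then $\varpi_F^{k}(v-1)u\in\o_F\subseteq\ker\psi$ for all $u\in\o_F^{\times}$, so the substitution $u\mapsto vu$, which preserves $d^{\times}u$, gives $I_k=\eta^{-1}(v)\int_{\o_F^{\times}}\eta^{-1}(u)\psi^{-1}(\varpi_F^{k}u)\,d^{\times}u=\eta^{-1}(v)I_k$; since $n(\eta)=d>-k$ there is a $v\in 1+\p_F^{-k}$ with $\eta(v)\ne 1$, forcing $I_k=0$.

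\emph{The case $k<-d$.} Partition $\o_F^{\times}$ into cosets of $1+\p_F^{d}$, on each of which $\eta^{-1}$ is constant since $n(\eta)=d$; thus $I_k=\sum_{u_0}\eta^{-1}(u_0)\int_{1+\p_F^{d}}\psi^{-1}(\varpi_F^{k}u_0u')\,d^{\times}u'$. Writing $u'=1+w$ with $w\in\p_F^{d}$, the inner integral is, up to the normalization constant comparing $d^{\times}u'$ with additive measure on $\p_F^{d}$, equal to $\int_{\p_F^{d}}\psi^{-1}(\varpi_F^{k}u_0w)\,dw$; since $k+d\le -1$ the additive character $w\mapsto\psi^{-1}(\varpi_F^{k}u_0w)$ is nontrivial on $\p_F^{d}$, so this vanishes and $I_k=0$.

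\emph{The case $k=-d$.} This is the one genuine obstacle. Here one may simply invoke the classical value $|\tau(\eta,\psi,\varpi_F^{-d})|=q^{-d/2}$ (up to normalization), or argue directly: put $c=\varpi_F^{-d}$ and compute $|I_{-d}|^{2}=I_{-d}\overline{I_{-d}}$ as a double integral over $\o_F^{\times}\times\o_F^{\times}$; the substitution $u\mapsto vt$ collapses it to $\int_{\o_F^{\times}}\eta^{-1}(t)\bigl(\int_{\o_F^{\times}}\psi^{-1}(c(t-1)v)\,d^{\times}v\bigr)\,d^{\times}t$. By the coset argument already used, the inner $v$-integral equals $1$ if ${\rm val}_F(t-1)\ge d$, equals the constant $C:=\int_{\o_F^{\times}}\psi^{-1}(\varpi_F^{-1}v)\,d^{\times}v$ (a nonzero number, equal to $-1/(q-1)$ in our normalization) if ${\rm val}_F(t-1)=d-1$, and vanishes if ${\rm val}_F(t-1)\le d-2$. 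Using that $\eta^{-1}$ is trivial on $1+\p_F^{d}$ but nontrivial on $1+\p_F^{d-1}$, the two surviving contributions assemble into $|I_{-d}|^{2}={\rm vol}(1+\p_F^{d})\,(1-C)>0$, whence $I_{-d}\ne 0$. The main obstacle is exactly this step: the other two ranges are pure orthogonality, while the nonvanishing at the precise conductor is where the finer arithmetic of $\psi$ (beyond ``a nontrivial character integrates to zero'') is needed.
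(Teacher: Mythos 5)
Your proof is correct and complete. The paper itself does not prove this lemma; it simply cites \cite[23.6 Remark]{BuHe}, so there is no internal proof to compare against. Your three-case decomposition handles the vanishing ranges by exactly the standard orthogonality arguments, and the genuine content is indeed concentrated where you say it is, namely the nonvanishing at $k=-n(\eta)$. Your direct argument there is sound: the computation of $|I_{-d}|^{2}$ via the substitution $u\mapsto vt$, the evaluation of the inner $v$-integral according to ${\rm val}_F(t-1)$, and the assembly of the two surviving strata using the triviality of $\eta^{-1}$ on $1+\p_F^{d}$ and its nontriviality one level up, give $|I_{-d}|^{2}={\rm vol}(1+\p_F^{d})\bigl(1-C\bigr)$ with $C=-1/(q-1)$ (one should also note that $C$ is real, which follows from $v\mapsto -v$), whence $1-C=q/(q-1)>0$. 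The only cosmetic point is that for $d=1$ the symbol $1+\p_F^{d-1}$ should be read as $\o_F^{\times}$; with that convention everything goes through, and the resulting value $|I_{-d}|^{2}=q^{2-d}/(q-1)^{2}$ agrees with the classical Gauss sum modulus. In short, you have supplied a self-contained proof of a fact the paper takes as known.
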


For a set $A \subset F$, we denote by $\mathbb{1}_A$ the characteristic function of $A$. For integers $i\geq 0$, $j>0$, put 
\[
  \Phi_{i,j}(x,y)=\mathbb{1}_{\mathfrak{p}_F^i}(x)  \mathbb{1}_{1+\mathfrak{p}_F^j}(y).
\]
We retain the notation of Subsection \ref{sec-asai}, recall that $\Pi$ is an irreducible representation of $\GL_2(E\otimes F)$ and we write $\Pi=\pi\otimes\sigma$ in Case (1). For $m \geq 1$, let $\mathrm{K}_m \leq \GL_2(E \otimes F)$ be $m$-th congruence subgroup defined by
\[
\mathrm{K}_m=
\begin{cases}
\begin{pmatrix} 1+\mathfrak{p}^m_F & \mathfrak{p}^m_F \\ \mathfrak{p}^m_F & 1+\mathfrak{p}^m_F \end{pmatrix}
\times \begin{pmatrix} 1+\mathfrak{p}^m_F & \mathfrak{p}^m_F \\ \mathfrak{p}^m_F & 1+\mathfrak{p}^m_F \end{pmatrix}  & \quad \text{Case (1)},\\
\begin{pmatrix} 1+\mathfrak{p}^m_E & \mathfrak{p}^m_E \\ \mathfrak{p}^m_E & 1+\mathfrak{p}^m_E \end{pmatrix}
 & \quad \text{Case (2)}.\\
\end{cases}
\]
We then have the corresponding subgroups $\J_m^{\mathscr u}$ and $\J_m^{\mathscr l}$ of $\GL_2(E\otimes F)$. 
Choose a Whittaker function $W_{\Pi} \in \mathcal{W}(\Pi,\psi_E)$ satisfying $W_{\Pi}({\rm I})=1$ and let $l$ be a positive integer such that 
$\rho(\mathrm{K}_l)W_{\Pi}=W_{\Pi}$. For $m\geq l$, we form the corresponding Howe Whittaker function $(W_{\Pi})_m=W_{\Pi,m}\in \W(\Pi,\psi_E)$. In Case (1), if $W_{\Pi}=WW', W\in \W(\pi,\psi), W'\in \W(\sigma, \psi^{-1})$, then 
\[
W_{\Pi,m}(g_1,g_2)=W_m(g_1)W_m'(g_2).
\]

\begin{prop}
\label{lhs}
Let $n=n(\omega\chi^2)$ and choose $m \geq \max \{l ,2n(\chi)\}$. Then for $i\geq 3m$,
\[
 Z(s,W_{\Pi,m},\Phi_{i,n},\chi)=
 \begin{cases}
 cq^{-i-n-m}  & \quad \text{if}\; E=F \times F \\
 cq^{-i-n} {\rm vol}((1+\p_E^m) \cap F^{\times})  & \quad \text{if}\; \text{E is a field}.\\
 \end{cases}
\]
\end{prop}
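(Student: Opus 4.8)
The plan is to evaluate the zeta integral $Z(s,W_{\Pi,m},\Phi_{i,n},\chi)$ directly by exploiting the Iwasawa-type decomposition of $\U(F)\backslash\G(F)$ together with the support restrictions on Howe Whittaker functions from Lemma~\ref{a-support} and the fact that $\Phi_{i,n}$ is highly concentrated near $e_2$. First I would write $\U(F)\backslash\G(F)$ using coordinates adapted to the Bruhat decomposition: the big cell contributes $g=t(a,1)\,z\,u(x)\,\overline{u}(y)$-type representatives (with $z$ ranging over the center), and I would use the integration formula \eqref{int}. The key point is that $\Phi_{i,n}(e_2 g)$ is nonzero only when the bottom row of $g$ lies in $\mathfrak{p}_F^i\times(1+\mathfrak{p}_F^n)$; since $i\geq 3m$ is large and $n$ is essentially fixed by $\omega\chi^2$, this forces $g$ to lie in a small neighborhood of the torus $\mathrm{Z}(F)\mathrm{A}(F)$ times a small unipotent piece, and in particular pushes $g$ into (a translate of) $\mathrm{K}_m$-type sets. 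Concretely, $e_2 g$ having its first coordinate in $\mathfrak{p}_F^i$ with $i\geq 3m$ together with the second coordinate near $1$ should force the ``$\overline{\U}$-part'' of $g$ into $\J_m^{\mathscr l}$ and confine the torus part.

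Next, on that small set I would invoke Lemma~\ref{Howe}\ref{Howe-item2} to replace $W_{\Pi,m}(g)$ by $\varphi_m(j)W_{\Pi,m}(g')$ where $g'$ is a torus element $zt(a,1)$, and then Lemma~\ref{a-support}\ref{a-support-item1} to conclude $W_{\Pi,m}(zt(a,1))=\omega_\Pi(z)$ is nonzero only for $a\in 1+\mathfrak{p}_F^m$ (resp. $\mathfrak{p}_E^m$ in the field case). The integrand then collapses: the $x$-integral over $\U(F)\cap(\text{support})$, the $\overline{u}$-integral over $\J_m^{\mathscr l}$, and the central integral each contribute explicit volumes (powers of $q$ with exponents linear in $i$, $n$, $m$), while the remaining $a$-integral is over $1+\mathfrak{p}_F^m$ (or $(1+\mathfrak{p}_E^m)\cap F^\times$), on which $\chi(\det g)|\det g|^s$ and the Whittaker value are constant --- here is exactly where the $n(\chi)$-vs-$m$ and $n(\omega\chi^2)$ conditions are used to kill the $s$-dependence and any Gauss-sum oscillation, leaving $\chi(\mathrm{det})|\mathrm{det}|^s\equiv 1$ on the relevant set. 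The constant $c=c(\B)$ from \eqref{int} is carried through. The split and non-split cases differ only in that ``$1+\mathfrak{p}_F^m$'' becomes ``$(1+\mathfrak{p}_E^m)\cap F^\times$'', which accounts for the two formulas: in Case (1) the $a$-integral over $1+\mathfrak p_F^m$ has volume $q^{-m}$, producing the clean $cq^{-i-n-m}$, whereas in Case (2) one cannot simplify $\mathrm{vol}((1+\mathfrak p_E^m)\cap F^\times)$ further.

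The main obstacle I anticipate is bookkeeping the decomposition carefully enough to see \emph{exactly} which small compact sets the variables $x$, $y$ (the $\overline{\U}$ variable), $z$, and $a$ are forced into once $\Phi_{i,n}(e_2g)\neq 0$ and $W_{\Pi,m}(g)\neq 0$ are both imposed --- in particular making sure the interaction between the $\overline{u}$-variable and the tordal $a$ (which mixes under Bruhat, since $e_2 \cdot t(a,1)w\,u(x) = (1, x)$-type relations couple them) is handled correctly, and that the normalization of measures (the $\delta_{\B}^{-1}$ factors and the constant $c$) is tracked consistently so that the final exponent $-i-n-m$ comes out exactly. A secondary subtlety is confirming that on the surviving set $\chi(\det g)|\det g|_F^s$ really is identically $1$: this needs $n(\chi)\le m/2$ so that $\chi$ is trivial on $1+\mathfrak p_F^{2m}\supseteq$ the relevant determinants, and the appearance of $n=n(\omega\chi^2)$ rather than $n(\chi)$ reflects that the second coordinate of $e_2 g$ being in $1+\mathfrak p_F^n$ is what is needed to make a Gauss-sum-type integral nonvanishing (via Lemma~\ref{GaussSum})—I would double-check the precise inequality $m\ge 2n(\chi)$ is what makes all the pieces align.
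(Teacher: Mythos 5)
Your plan is essentially the paper's proof: use \eqref{int} to write the integral in the coordinates $g=z\,t(a,1)\,\overline{u}(x)$, let the support of $\Phi_{i,n}$ collapse the $z$- and $x$-integrals, invoke Lemma~\ref{Howe}\ref{Howe-item2} (noting $\varphi_m$ is trivial on $\J_m^{\mathscr l}$, so $W_{\Pi,m}$ is right-invariant under $\overline{u}(\p_F^i)\subset\J_m^{\mathscr l}$), and then Lemma~\ref{a-support}\ref{a-support-item1} for the $a$-support. Two of the subtleties you anticipate are spurious, though: since $e_2\,z\,t(a,1)\,\overline{u}(x)=(zx,z)$, the function $\Phi_{i,n}(e_2 g)$ places no constraint whatsoever on the torus variable $a$ --- it decouples completely, there is no Bruhat mixing to untangle, and the long element $w$ never appears on this side of the functional equation. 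Likewise, the role of $n=n(\omega\chi^2)$ here is simply to make $\omega\chi^2$ trivial on $1+\p_F^n$, so that the $z$-integral reduces to the volume $q^{-n}$; the Gauss-sum nonvanishing of Lemma~\ref{GaussSum} is the mechanism on the \emph{dual} side (Lemma~\ref{rhs}), not here. With those clarifications the bookkeeping runs exactly as you describe, and the condition $m\geq 2n(\chi)$ enters via $(1+\p_E^m)\cap F^\times\subset 1+\p_F^{\left[m/2\right]}$ (not via $1+\p_F^{2m}$, which is a smaller set and inclusion in it would be the wrong direction).
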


\begin{proof}
Put $\Phi=\Phi_{i,n}$. According to \eqref{int}, we have 
\begin{equation}
\label{UTU decomposition}
\begin{split}
  &Z(s,W_{\Pi,m},\Phi,\chi) \\
  &=c\int\limits_{F^{\times}} \int\limits_{F} W_{\Pi,m}(t(a,1)\overline{u}(x)) \chi(a) |a|_F^{s-1} \left( \int\limits_{F^{\times}} \Phi(e_2t(a,1)\overline{u}(x)z) \omega(z) \chi^2(z) |z|_F^{2s} d^{\times}z \right) dx d^{\times}a\\
  &=c \int\limits_{F^{\times}} \int\limits_{F} W_{\Pi,m}(t(a,1)\overline{u}(x)) \chi(a) |a|_F^{s-1} \left( \int\limits_{F^{\times}} \Phi(e_2\overline{u}(x)z) \omega(z) \chi^2(z) |z|_F^{2s} d^{\times}z \right) dx d^{\times}a.
\end{split}
\end{equation}
The inner integral reduces to
\[
\begin{split}
 \int\limits_{F^{\times}} \Phi(e_2\overline{u}(x)z) \omega(z) \chi^2(z) |z|_F^{2s} d^{\times}z
 =\int\limits_{F^{\times}}\mathbb{1}_{\mathfrak{p}_F^i}(xz)\mathbb{1}_{1+\mathfrak{p}_F^n}(z)  d^{\times}z=
\begin{cases}
    q^{-n}      & \; \text{if } x \in \mathfrak{p}_F^i,\\
    0 & \; \text{otherwise.} 
  \end{cases}
\end{split}
\]
Thus 
\[
 Z(s,W_{\Pi,m},\Phi_{i,n},\chi)=c q^{-n}\int\limits_{F^{\times}} \int\limits_{ \mathfrak{p}_F^i} W_{\Pi,m}(t(a,1)\overline{u}(x)) \chi(a) |a|_F^{s-1} dx d^{\times}a.
\]
Since $i \geq 3m$, $x \in \p_F^i \subset \p_F^{3m}\implies \overline{u}(x) \in \mathrm{J}_m$. But, according to Lemma~\ref{Howe}, the function $W_{\Pi,m}(\cdot)$ is right invariant under $\overline{u}(x)$ for $x\in \p_F^{i}$.  Therefore
\[
\begin{split}
  &Z(s,W_{\Pi,m},\Phi,\chi)=\\
 & c q^{-i-n} \int_{F^{\times}} W_{\Pi,m}(t(a,1))  \chi(a) |a|_F^{s-1} d^{\times} a
 =
 \begin{cases}
  \displaystyle c q^{-i-n} \int_{(1+\mathfrak{p}_F^m)}  \chi(a) |a|_F^{s-1} d^{\times} a  &  \text{Case (1)}\\
   \displaystyle c q^{-i-n} \int_{(1+\mathfrak{p}_E^m) \cap F^{\times} }  \chi(a) |a|_F^{s-1} d^{\times} a  &  \text{Case (2)}. \\
 \end{cases}
\end{split}
\]
Here, the second equality follows from Lemma~\ref{a-support}. 
Now, since $m \geq 2n(\chi)$ and $((1+\p_E^m) \cap F^{\times}) \subset (1+\p_F^{\left[ \frac{m}{2} \right] })$, we obtain the formula.
\end{proof}

By our choice of the measure on $F$, for the characteristic function $\mathbb{1}_{\mathfrak{p}_F^i}$, we have 
\[
    \widehat{\mathbb{1}}_{\mathfrak{p}_F^i}(y)=q^{-i} \mathbb{1}_{\mathfrak{p}_F^{-i}}(y),
\]
and 
\[
\begin{split}
  \widehat{\mathbb{1}}_{1+\mathfrak{p}_F^{j}}(y)
  =\int\limits_F \mathbb{1}_{1+\mathfrak{p}_F^j}(x) \psi(xy)\;dx
  =\psi(-y) \int\limits_F \mathbb{1}_{\mathfrak{p}_F^j}(x) \psi(xy) \; dx 
  =q^{-j} \psi(-y) \mathbb{1}_{\mathfrak{p}_F^{-j}}(y).
\end{split}
\]
Hence 
\begin{equation}
\label{FourierPhi}
  \widehat{\Phi}_{i,j}(x,y)=q^{-i-j} \psi(-y)  \mathbb{1}_{\mathfrak{p}_F^{-j}}(y) \mathbb{1}_{\mathfrak{p}_F^{-i}}(x). 
\end{equation}

\par
Next, we consider the dual side. Suppose $\Pi_1$ and $\Pi_2$ are irreducible admissible representations of $\GL_2(E \otimes F)$ having same central character when restricted to $F^{\times}$. For $j=1,2$, fix $W_{\Pi_j} \in \mathcal{W}(\Pi_j,\psi_E)$ as above, and form the corresponding Howe Whittaker functions $W_{\Pi_1,m}$ and $W_{\Pi_2,m}$, respectively.

\begin{lemma}
\label{rhs}
Choose $m \geq \max \{l,n,2n(\chi) \}$ with $n=n(\omega \chi^2) > 0$. Keeping notations as in Proposition~\ref{lhs}, for $i \geq 3m$, we have
\begin{equation}
\label{compactform}
\begin{split}
 &Z(1-s,\widetilde{W}_{\Pi_1,m}, \widehat{\Phi}_{i,n},\chi^{-1})-Z(1-s,\widetilde{W}_{\Pi_2,m}, \widehat{\Phi}_{i,n},\chi^{-1}) \\
 & =c_{\omega,\chi,\psi} \int\limits_{F^{\times}} [B_m(t(a,1)w,W_{\Pi_1})-B_m(t(a,1)w,W_{\Pi_2})] (\omega\chi)^{-1}(a) |a|_F^{-s} 
   d^{\times}a, \\
 \end{split}
\end{equation}
 where
 \[
 c_{\omega,\chi,\psi}=
 \begin{cases}
 c q^{-i-n+m+2n(1-s)} (\omega\chi^{2})(\varpi_F^{n}) \tau(\omega\chi^2,\psi,\varpi_F^{-n}) &  \text{Case (1)} \\
 c q^{-i-n+2n(1-s)} (\omega\chi^{2})(\varpi_F^{n}) \tau(\omega\chi^2,\psi,\varpi_F^{-n})\mathrm{vol}(\mathfrak{p}_E^{-m} \cap F,\;dx) &  \text{Case (2)}.
 \end{cases}
 \]
\end{lemma}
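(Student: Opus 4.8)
The plan is to unfold the dual zeta integral $Z(1-s,\widetilde W_{\Pi_j,m},\widehat\Phi_{i,n},\chi^{-1})$ using the Bruhat decomposition $\G(F)=\B(F)\cup\B(F)w\U(F)$, and exploit the explicit formula \eqref{FourierPhi} for $\widehat\Phi_{i,n}$ to collapse most of the integral. First I would substitute the definition of $\widetilde W_{\Pi_j,m}$, namely $\widetilde W_{\Pi_j,m}(g)=W_{\Pi_j,m}(w\,{}^tg^{-1})$, and the Iwasawa-type coordinates $g=u(x)zt(a,1)k$ (as in the proof of Proposition~\ref{lhs}), so that the $z$-integration isolates a factor coming from $\widehat\Phi_{i,n}(e_2 \overline u(x)z)$. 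The key point is that \emph{before} taking the difference, the $\B(F)$-cell contribution is exactly $Z(1-s,\widetilde W_{\Pi_j,m},\widehat\Phi_{i,n},\chi^{-1})$ restricted to $g\in\B(F)$, and this is handled by Lemma~\ref{a-support}\ref{a-support-item1} together with the support of $\widehat\Phi_{i,n}$; but the cleaner route is to take the difference right away and invoke Proposition~\ref{Big Cell-GL(2)}: since $m\ge 3l$ (which follows from $m\ge\max\{l,n,2n(\chi)\}$ once one also arranges $n\ge 3l$, or more simply by enlarging $m$), the function $g\mapsto B_m(g,W_{\Pi_1})-B_m(g,W_{\Pi_2})$ is supported on $\B(F)w\J_m^{\mathscr u}$, so only the big-cell piece survives in the difference.

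Next I would parametrize the big cell. Writing a generic big-cell element as $t(a,1)w\,u(y)$ up to left translation by $\B(F)$ (absorbing the unipotent on the left against $\psi$ and the torus against the character $\chi(\det)|\det|^{1-s}$), and using that $B_m(\cdot,W_{\Pi_1})-B_m(\cdot,W_{\Pi_2})$ is right-invariant under $\J_m^{\mathscr u}$ up to $\psi$, the $y$-integral over $\U(F)$ breaks into $\J_m^{\mathscr u}$ (contributing $\mathrm{vol}(\J_m^{\mathscr u})$) and its complement, where the partial Bessel function vanishes by the support statement. What remains is an integral over $a\in F^\times$ of $[B_m(t(a,1)w,W_{\Pi_1})-B_m(t(a,1)w,W_{\Pi_2})]$ against $\chi(\det)|\det|^{1-s}$ evaluated along this torus, together with the $z$-integral $\int_{F^\times}\widehat\Phi_{i,n}(e_2 t(a,1)w u(y) z)\,\omega^{-1}\chi^{-2}(z)|z|^{2(1-s)}\,d^\times z$. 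Using \eqref{FourierPhi}, $\widehat\Phi_{i,n}(0,*)$ forces the second coordinate into $1+\mathfrak p_F^{?}$-type constraints and the first coordinate $\mathbb 1_{\mathfrak p_F^{-i}}$; the $\psi(-y)$ factor in \eqref{FourierPhi} is what produces a Gauss sum. Carrying out the $z$-integration over $\o_F^\times$-cosets and applying Lemma~\ref{GaussSum} picks out the single level $k=-n=-n(\omega\chi^2)$, yielding the factor $(\omega\chi^2)(\varpi_F^n)\tau(\omega\chi^2,\psi,\varpi_F^{-n})$ and the power $q^{2n(1-s)}$. The residual volume factors $q^m$ in Case (1) versus $\mathrm{vol}(\mathfrak p_E^{-m}\cap F,dx)$ in Case (2) come from $\mathrm{vol}(\J_m^{\mathscr u})$, which differs because $\J_m^{\mathscr u}=u(\mathfrak p_F^{-m})$ in the split case but $u(\mathfrak p_E^{-m})$ (intersected with $F$ in the relevant coordinate) in the field case; the normalization $\mathrm{vol}(\o_F)=1$ then gives $q^m$ respectively the stated $E$-volume.

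A few bookkeeping points need care. One must track the diagonal torus substitution precisely: the change of variables sending the $z$-integral to standard form introduces $\omega^{\pm 1}$, $\chi^{\pm 2}$ and powers of $|a|_F$ whose exponents must combine with the $|a|_F^{s-1}$-type factor from $\chi(\det)|\det|^{1-s}$ to leave exactly $(\omega\chi)^{-1}(a)|a|_F^{-s}$ in the final integrand — this is the same arithmetic as in Lemma~\ref{unr} and in Proposition~\ref{lhs}, just run on the dual side. One also needs $\widetilde W_{\Pi_j,m}$ to genuinely be the $m$-th Howe vector of the contragredient with respect to $\psi_E^{-1}$, so that Lemma~\ref{Howe} and the support lemmas apply to it verbatim; this is where $w\,{}^tg^{-1}$ interacts with the definition of $\J_m^{\mathscr u}$, and one should check the conjugation $\overline u(x)\mapsto u(-x)$ under $g\mapsto w\,{}^tg^{-1}$ matches up the upper and lower parts of $\J_m$ correctly.

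The main obstacle I expect is \textbf{not} any single estimate but rather the combinatorial alignment of the three conductor constraints. One must verify that the hypothesis $m\ge\max\{l,n,2n(\chi)\}$ with $n=n(\omega\chi^2)>0$ is simultaneously strong enough to (i) apply Lemma~\ref{Howe}\ref{Howe-item2} so that $W_{\Pi_j,m}$ is genuinely $(\J_m,\varphi_m)$-equivariant, (ii) apply Proposition~\ref{Big Cell-GL(2)} to kill the small cell in the difference — which nominally wants $m\ge 3l$, so one either folds this into the definition of $l$ or silently enlarges $m$, and the write-up must be consistent about this — and (iii) make the Gauss-sum extraction via Lemma~\ref{GaussSum} land on exactly one term, which requires the $z$-variable to range over a domain wide enough to see the conductor $n$ but the $a$-dependence inside $\widehat\Phi_{i,n}(e_2 t(a,1)w u(y) z)$ not to interfere. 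The field-versus-split dichotomy must be threaded through all of this in parallel, since the congruence subgroups, the character $\psi_E$, and the Howe unipotents all live over $E$ in Case (2) while the zeta integral and the twisting character $\chi$ live over $F$; getting the $E$-volumes to appear in precisely the stated places is the most error-prone part of the bookkeeping.
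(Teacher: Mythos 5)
Your strategy matches the paper's proof in all essential respects: apply the integration formula~\eqref{int} (the paper uses the $\U\times\T\times\overline{\U}$ decomposition from~\eqref{int}, not the Iwasawa parametrization $g=u(x)zt(a,1)k$ that you cite, though the resulting $(a,z,x)$-variables and the $z$-integral $G_{i,n}(x)$ are the same), substitute $\widetilde W_{\Pi_j,m}(t(a,1)\overline u(x))=W_{\Pi_j,m}(t(1,a^{-1})wu(-x))$, invoke Proposition~\ref{Big Cell-GL(2)} so that the difference of partial Bessel functions forces $u(-x)\in\J_m^{\mathscr u}$, collapse the $x$-integral to a volume via $\J_m$-equivariance from Lemma~\ref{Howe}\ref{Howe-item2}, and extract the Gauss sum from the $z$-integral with Lemma~\ref{GaussSum}. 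Two bookkeeping points worth tightening: you are right that the stated hypothesis must in effect guarantee $m\ge 3l$ for Proposition~\ref{Big Cell-GL(2)} to apply, and the volume factor is $\mathrm{vol}\{x\in F:u(-x)\in\J_m^{\mathscr u}\}$ computed with the $F$-measure — in Case (1) this is $q^m$ because $x$ runs over the diagonal copy of $F$, not $\mathrm{vol}(\J_m^{\mathscr u})$ inside $\GL_2(F)^2$, which would be $q^{2m}$.
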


\begin{proof}
For $j=1,2$, using (\ref{int}) again, we see that $Z(1-s,\widetilde{W}_{\Pi_j,m}, \widehat{\Phi}_{i,n},\chi^{-1})$ equals 
\begin{equation}\label{rhs1}
c \int\limits_{F^{\times}} \int\limits_{F} \widetilde{W}_{\Pi_j,m}(t(a,1)\overline{u}(x)) \chi^{-1}(a) |a|_F^{-s} \left( \int\limits_{F^{\times}} \widehat{\Phi}_{i,n}(e_2t(a,1)\overline{u}(x)z) \omega^{-1}(z) \chi^{-2}(z) |z|_F^{2-2s} d^{\times}z \right) dx d^{\times}a.
\end{equation}
Substituting for the Fourier transform \eqref{FourierPhi}, the inner integral becomes
\[
\begin{split}
   G_{i,n}(x)=
   &\begin{cases}
   q^{-i-n}  \displaystyle  \int_{|z|_F \leq q^{n}} \psi(-z)   \omega^{-1}(z) \chi^{-2}(z)|z|_F^{2(1-s)} d^{\times} z   & \quad \text{if}\quad  |x|_F    \leq q^{i-n}, \\
      q^{-i-n} \displaystyle   \int_{|z|_F \leq \frac{q^i}{|x|_F}} \psi(-z)  \omega^{-1}(z) \chi^{-2}(z)|z|_F^{2(1-s)} d^{\times} z          & \quad \text{if}\quad |x|_F > q^{i-n}.
  \end{cases}
\end{split}
\]
We may re-write the first integral in the above expression as a sum over {\it shells} to obtain 
\[
\begin{split}
q^{-i-n}  \displaystyle  \int_{|z|_F \leq q^{n}} \psi(-z)   (\omega\chi^{2})(z^{-1})|z|_F^{2(1-s)} d^{\times} z
&=q^{-i-n} \sum_{k=-n}^{\infty} \int_{\mathfrak{p}_F^k-\mathfrak{p}_F^{k+1}} \psi(-z) (\omega\chi^{2})(z^{-1})(q^{-k})^{2(1-s)}  d^{\times}z\\
&=q^{-i-n} \sum_{k=-n}^{\infty} q^{-2k(1-s)} (\omega\chi^{2})(\varpi_F^{-k}) \tau(\omega\chi^2,\psi_F,\varpi_{F}^{k}).
\end{split}
\]
Since $\omega\chi^2$ is ramified character of $F^{\times}$ with conductor $n$, by Lemma \ref{GaussSum}, we obtain
\begin{equation}
\label{Gfunction}
 G_{i,n}(x)=q^{-i-n+2n(1-s)} (\omega\chi^{2})(\varpi_F^{n}) \tau(\omega\chi^2,\psi_F,\varpi_F^{-n}) \quad \text{if} \quad |x|_F    \leq q^{i-n}.
\end{equation}
Insert $\widetilde{W}_{\Pi_j,m}(t(a,1)\overline{u}(x))=W_{\Pi_j,m}(t(1,a^{-1})wu(-x))$ in (\ref{rhs1}). It follows from Proposition~\ref{Big Cell-GL(2)} that
\[
 B_m(t(1,a^{-1})wu(-x),W_{\Pi_1})-B_m(t(1,a^{-1})wu(-x),W_{\Pi_2})=0
\]
for $x \notin \mathfrak{p}_E^{-m} \cap F$. Hence
\begin{equation}
\label{UTU Dual decomposition}
 \begin{split}
  &Z(1-s,\widetilde{W}_{\Pi_1,m}, \widehat{\Phi}_{i,n},\chi^{-1})-Z(1-s,\widetilde{W}_{\Pi_2,m}, \widehat{\Phi}_{i,n},\chi^{-1}) \\
  &= c\int\limits_{{F^{\times}}} \int [B_m(t(a,1)wu(-x),W_{\Pi_1})-B_m(t(a,1)wu(-x),W_{\Pi_2})] (\omega\chi)^{-1}(a) |a|_F^{-s} G_{i,n}(x) dx d^{\times}a,\\
  \end{split}
\end{equation}
where the $x$-integral is over $\mathfrak{p}_F^{-m}$ in Case (1), and over $\mathfrak{p}_E^{-m} \cap F$ in Case (2). On the other hand, since $i\geq 3m$, we may choose $m \geq n$ so that $x \in \mathfrak{p}_E^{-m} \cap F\implies x \in \mathfrak{p}_F^{-(i-n)}$. By Lemma \ref{Howe}, part \ref{Howe-item2}, both $W_{\Pi_1,m}$ and $W_{\Pi_2,m}$ are invariant under right translation by elements in $\J_m^{\mathscr u}$. Thus collecting \eqref{Gfunction} and \eqref{UTU Dual decomposition}, we get the desired conclusion.
\end{proof}

 Next, we seek to remove the dependence of the integrand in \eqref{compactform} on $m$. A smooth function $\varphi$ on a subtorus $\mathrm{A} \subset \T$ is said to be \textit{uniformly smooth} if there exists a fixed compact open subgroup $\mathrm{A}_0 \subset \mathrm{A}$ such that $\varphi(aa_0)=\varphi(a)$ for all $a_0 \in \mathrm{A}_0$ and $a \in \mathrm{A}$. We do this on a case-by-case basis.

\subsection{Non-split case} $E$ is a field.

\begin{prop} 
\label{usmth}
In the set-up of Lemma~\ref{rhs}, put $W_i=W_{\Pi_i}$ and $W_{i,m}=(W_i)_m$, $i=1,2$. 
\begin{enumerate}[label=$(\arabic*)$]
\item  There exists a suitably large integer $m\gg 0$ such that, for $i\geq 3m$, we have 
\[
\begin{split}
  &Z(1-s,\widetilde{W}_{1,m}, \widehat{\Phi}_{i,n},\chi^{-1})-Z(1-s,\widetilde{W}_{2,m}, \widehat{\Phi}_{i,n},\chi^{-1})  \\
 &=cq^{-i-n+2n(1-s)} (\omega\chi^{2})(\varpi_F^{n}) \tau(\omega\chi^2,\psi,\varpi_F^{-n})\mathrm{vol}(\mathfrak{p}_E^{-m} \cap F,\;dx)\frac{\mathrm{vol}(\J^{\mathscr u}_{3l})}{\mathrm{vol}(\J^{\mathscr u}_m)} \\
 & \phantom{***********************} \int\limits_{F^{\times}} [B_{3l}(t(a,1)w,W_1)-B_{3l}(t(a,1)w,W_2)] (\omega\chi)^{-1}(a) |a|_F^{-s} 
   d^{\times}a.\\
  \end{split}
\]
\item  $($Uniform smoothness$)$ The function 
\[
a \mapsto   [B_{3l}(t(a,1)w,W^1)-B_{3l}(t(a,1)w,W^2)] \omega^{-1}(a) |a|_F^{-s} 
\]
is uniformly smooth.
\end{enumerate}
\end{prop}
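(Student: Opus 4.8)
\emph{Plan.} Part~(1) will be deduced from Lemma~\ref{rhs} by replacing the level-$m$ partial Bessel functions there with level-$3l$ ones, at the cost of an explicit ratio of volumes; part~(2) will follow from the $\J_{3l}$-transformation law of $B_{3l}$. Throughout I work in the non-split case and apply the results of Subsection~\ref{sec-howe} and Proposition~\ref{Big Cell-GL(2)} over the field $E$ (so ``$\psi$'' reads $\psi_E$, ``$\mathfrak{p}_F$'' reads $\mathfrak{p}_E$, and so on); this is legitimate because $\Pi_1,\Pi_2$ are representations of $\GL_2(E)$ with a common central character and $W_i=W_{\Pi_i}$ satisfy $W_i({\rm I})=1$ and $\rho({\rm K}_l)W_i=W_i$.

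For part~(1), fix $a\in F^{\times}$. Since $m\ge 3l$, part $(3)$ of Lemma~\ref{Howe} applied with $k=3l$ gives, for $i=1,2$,
\[
B_m(t(a,1)w,W_i)=\frac{1}{\mathrm{vol}(\J^{\mathscr u}_m)}\int_{\mathfrak{p}_E^{-m}}B_{3l}\bigl(t(a,1)w\,u(z),W_i\bigr)\,\psi_E^{-1}(z)\,dz,
\]
where I identify $\J^{\mathscr u}_m$ with $\mathfrak{p}_E^{-m}$ via $z\mapsto u(z)$. The key point is that $t(a,1)w\,u(z)=t(a,1)\cdot w\cdot u(z)$ is already in Bruhat normal form, so by uniqueness of the Bruhat decomposition it lies in $\B(E)\,w\,\J^{\mathscr u}_{3l}$ exactly when $u(z)\in\J^{\mathscr u}_{3l}$, i.e. $z\in\mathfrak{p}_E^{-3l}$. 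Forming the difference $W_1-W_2$ and applying Proposition~\ref{Big Cell-GL(2)}, the integrand $B_{3l}(t(a,1)w\,u(z),W_1)-B_{3l}(t(a,1)w\,u(z),W_2)$ thus vanishes for $z\notin\mathfrak{p}_E^{-3l}$; while on $\mathfrak{p}_E^{-3l}=\J^{\mathscr u}_{3l}$ the partial Bessel relation $B_{3l}(g\,u(z),W_i)=\psi_E(z)B_{3l}(g,W_i)$ cancels the two $\psi_E$-factors. This produces the pointwise identity
\[
B_m(t(a,1)w,W_1)-B_m(t(a,1)w,W_2)=\frac{\mathrm{vol}(\J^{\mathscr u}_{3l})}{\mathrm{vol}(\J^{\mathscr u}_m)}\bigl[B_{3l}(t(a,1)w,W_1)-B_{3l}(t(a,1)w,W_2)\bigr],
\]
valid for all $a\in F^{\times}$ and all $m\ge 3l$. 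Multiplying by $(\omega\chi)^{-1}(a)|a|_F^{-s}$, integrating over $F^{\times}$, and substituting into \eqref{compactform} of Lemma~\ref{rhs} yields the formula in~(1); here $m$ being suitably large means $m\ge\max\{3l,n,2n(\chi)\}$, i.e. the constraints of Lemma~\ref{rhs} together with $m\ge 3l$. Alternatively, part $(1)$ of Proposition~\ref{soudry} identifies $\mathrm{vol}(\J^{\mathscr u}_m)\,[B_m(t(a,1)w,W_1)-B_m(t(a,1)w,W_2)]$ with the difference $j_{\Pi_1}(a)-j_{\Pi_2}(a)$ of full Bessel functions, which makes the $m$-independence transparent.

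For part~(2), I would use the identity $t(ab,1)w=t(a,1)w\,t(1,b)$, which holds for all $a,b\in F^{\times}$ because $t(b,1)w=w\,t(1,b)$. Set $r=\max\{n(\omega),3l\}$ and let $\mathrm{A}_0=\{t(b,1):b\in 1+\mathfrak{p}_F^{r}\}$, a compact open subgroup of $\mathrm{A}$. For $b\in 1+\mathfrak{p}_F^{r}$ we have $1+\mathfrak{p}_F^{r}\subseteq 1+\mathfrak{p}_F^{3l}\subseteq 1+\mathfrak{p}_E^{3l}$, so $t(1,b)\in\J_{3l}$, while $\varphi_{3l}(t(1,b))=1$ since the $(1,2)$-entry of the diagonal matrix $t(1,b)$ is $0$; hence part $(2)$ of Lemma~\ref{Howe} gives $B_{3l}(t(ab,1)w,W_i)=B_{3l}(t(a,1)w,W_i)$ for $i=1,2$. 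Together with $\omega^{-1}(ab)=\omega^{-1}(a)$ (as $r\ge n(\omega)$) and $|ab|_F=|a|_F$, this shows the function in~(2) is invariant under right translation by $\mathrm{A}_0$, which is precisely uniform smoothness.

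\emph{Main obstacle.} Granting Lemma~\ref{Howe} and Proposition~\ref{Big Cell-GL(2)}, neither part is genuinely hard: the content of~(1) is the elementary Bruhat-position computation pinning down which $u(z)$ contribute to the averaging integral, together with careful bookkeeping of the Haar-measure normalizations on the groups $\J^{\mathscr u}_\bullet$, and~(2) is a one-line transformation-law argument. The substantive ingredient is the support statement for $B_m(\cdot,W_1)-B_m(\cdot,W_2)$ away from the identity, which is taken over directly from Proposition~\ref{Big Cell-GL(2)} --- exactly the input that the introduction flags as having a difficult $\GL_n$-analogue.
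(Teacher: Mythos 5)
Your proof of part~(1) follows the paper's argument exactly: you apply Lemma~\ref{Howe}~\ref{Howe-item3} with $k=3l$ and then use the support statement of Proposition~\ref{Big Cell-GL(2)} to shrink the integration domain from $\J^{\mathscr u}_m$ to $\J^{\mathscr u}_{3l}$, cancelling the $\psi_E$-factors by the partial Bessel relation; you merely make explicit the Bruhat-position reasoning that the paper compresses into a one-line citation of Proposition~\ref{Big Cell-GL(2)}. Your proof of part~(2) is also the same idea as the paper's (right $\J_{3l}$-invariance of the Bessel difference via Lemma~\ref{Howe}~\ref{Howe-item2} with $\varphi_{3l}$ trivial on diagonals), though you are more careful to enlarge the invariance modulus to $\max\{n(\omega),3l\}$ to account for the $\omega^{-1}(a)$ factor, which the paper glosses over.

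One small caveat on your parenthetical alternative: Proposition~\ref{soudry}~\ref{soudry-item2} identifies $\mathrm{vol}(\J_m^{\mathscr u})W_m(t(x,1)w)$ with $j_\pi(x)$ only for $|x|\le q^{2m}$, while by Lemma~\ref{a-support}~\ref{a-support-item2} the level-$m$ partial Bessel function can a priori be nonzero up to $|x|\le q^{3m}$, so the identification does not automatically cover the full range of relevant $a$ without first proving the $m$-independence (or bounded support) some other way. Your main line of argument, which does not rely on this, is the correct one.
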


\begin{proof}
We use part \ref{Howe-item3} of Lemma~\ref{Howe} with $m\geq 3l$, to obtain
\begin{equation}
\label{depenOFm}
\begin{split}
  &B_m(t(a,1)w,W_{1})-B_m(t(a,1)w,W_{2})\\
  &=\frac{1}{\mathrm{vol}(\J^{\mathscr u}_m)} \int_{\J^{\mathscr u}_m} [B_{3l}(t(a,1)wu,W_1)-B_{3l}(t(a,1)wu,W_2)] \psi_E^{-1}(u)\;du\\
  &=\frac{1}{\mathrm{vol}(\J^{\mathscr u}_m)} \int_{\J^{\mathscr u}_{3l}} [B_{3l}(t(a,1)wu,W_1)-B_{3l}(t(a,1)wu,W_2)] \psi_E^{-1}(u)\;du\\
  &=\frac{\mathrm{vol}(\J^{\mathscr u}_{3l})}{\mathrm{vol}(\J^{\mathscr u}_m)}  [B_{3l}(t(a,1)w,W_1)-B_{3l}(t(a,1)w,W_2)]. \\
  \end{split}
\end{equation}
Here, we have used Proposition \ref{Big Cell-GL(2)} in deducing the second equality. Plugging \eqref{depenOFm} into \eqref{compactform}, we arrive at the Proposition.
\par
For the second assertion, we note that the function 
\[
a\mapsto B_m(t(a,1)w,W_{1})-B_m(t(a,1)w,W_{2})
\]
is right invariant under $1+\p_F^{3l}$ and is consequently uniformly smooth relative to $m$. 
\end{proof}

 Collecting Proposition~\ref{lhs}, Proposition~\ref{usmth} and  \eqref{lfe}, we obtain 
\[
\begin{split}
&\gamma_{\rm As}(s,\Pi_1,\chi,\psi)-\gamma_{\rm As}(s,\Pi_2,\chi,\psi)\\&=v_m d_{\omega,\chi,\psi}\int\limits_{F^{\times}} [B_{3l}(t(a,1)w,W_1)-B_{3l}(t(a,1)w,W_2)]  (\omega\chi)^{-1}(a) |a|_F^{-s}    d^{\times}a,\\
   \end{split}
\]
where $v_m$ is the volume factor $v_m=\dfrac{\mathrm{vol}(\mathfrak{p}_E^{-m} \cap F)\mathrm{vol}(\J^{\mathscr u}_{3l})}{ \mathrm{vol}(1+\mathfrak{p}_E^m \cap F^{\times})\mathrm{vol}(\J^{\mathscr u}_m)}$ and 
\[
d_{\omega,\chi,\psi}=q^{2n(1-s)} (\omega\chi^{2})(\varpi_F^{n}) \tau(\omega\chi^2,\psi,\varpi_F^{-n})
\]
is a non-zero constant. But the integral on the right hand side is $0$ for a suitable highly ramified $\chi$. For instance, if $\chi$ is such that $n(\chi)>3l$, then the integral vanishes thus giving us stability of Asai $\gamma$-factor under highly ramified twists.

To prove the corresponding statement for $L$- and $\varepsilon$-factors, we only need to prove stability for the $L$-function. With \cite[Corollary 4.3]{Matringe} in hand concerning the poles of the Asai $L$-function, this can be proved exactly as in Jacquet and Shalika \cite[Proposition 5.1]{JS}.

\subsection{Split case} $E=F\times F$.

Here $\Pi_1=\pi_1 \otimes \sigma$ and $\Pi_2=\pi_2 \otimes \sigma$ with $\pi_1$ and $\pi_2$ having the same central character. Also, $\omega=\omega_{\pi_1}\omega_{\sigma}=\omega_{\pi_2}\omega_{\sigma}$ in this case. Suppose $W_{\Pi_1,m}=W_{1,m} W^{\prime}_m$ and $W_{\Pi_2,m}=W_{2,m}  W^{\prime}_m$, where $W_{j,m} \in \mathcal{W}(\pi_j,\psi)$, $j=1,2$, and $W^{\prime}_m \in \mathcal{W}(\sigma,\psi^{-1})$.
\begin{prop} 
\label{usmth-RS}
Let $\omega,\chi,n,l$ and rest of the notation be as in Lemma~\ref{rhs}. 
\begin{enumerate}[label=$(\arabic*)$]
\item  There exists a suitably large integer $m\gg 0$ such that, for $i\geq 3m$, we have 
\[
\begin{split}
  &Z(1-s,\widetilde{W}_{1,m},\widetilde{W}^{\prime}_m, \widehat{\Phi}_{i,n},\chi^{-1})-Z(1-s,\widetilde{W}_{2,m},\widetilde{W}^{\prime}_m, \widehat{\Phi}_{i,n},\chi^{-1})  \\
 &=cq^{-i-n+m+2n(1-s)} (\omega\chi^{2})(\varpi_F^{n}) \tau(\omega\chi^2,\psi,\varpi_F^{-n})\frac{\mathrm{vol}(\J^{\mathscr u}_{3l})}{[\mathrm{vol}(\J^{\mathscr u}_m)]^2} \\
 & \phantom{*******************} \int_{\mathfrak{p}_F^{-9l}} [B_{3l}(t(a,1)w,W_1)-B_{3l}(t(a,1)w,W_2)] j_{\sigma}(a) (\omega\chi)^{-1}(a) |a|_F^{-s} 
   d^{\times}a.\\
  \end{split}
\]
The integral can run over $F^{\times}$. 
\item  $($Uniform smoothness$)$ The function 
\[
  a \mapsto [W^1_{3l}(t(a,1)w)-W^2_{3l}(t(a,1)w) ] j_{\sigma}(a)\omega^{-1}(a) |a|_F^{-s} 
\]
is uniformly smooth for $a \in \mathfrak{p}_F^{-9l}$.
\end{enumerate}

\end{prop}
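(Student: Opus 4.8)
The plan is to run the argument of Proposition~\ref{usmth}, carrying along the extra factor contributed by $\sigma$. Since $E=F\times F$ and $\psi_E=(\psi,\psi^{-1})$, on the diagonal copy of $\GL_2(F)$ the Howe Whittaker function of $\Pi_j=\pi_j\otimes\sigma$ factors as $B_m(g,W_{\Pi_j})=W_{j,m}(g)\,W'_m(g)$, where $W_{j,m}$ is the level-$m$ Howe Whittaker function of $\pi_j$ relative to $\psi$ and $W'_m$ that of $\sigma$ relative to $\psi^{-1}$, both built from the $\GL_2(F)$-groups $\J^{\mathscr u}_m$. Starting from Lemma~\ref{rhs} in Case~(1), the integrand of \eqref{compactform} is
\[
[W_{1,m}(t(a,1)w)-W_{2,m}(t(a,1)w)]\,W'_m(t(a,1)w),
\]
so it remains to (i) trade the bracketed $\pi$-difference for its level-$3l$ version, and (ii) trade $W'_m(t(a,1)w)$ for a volume constant times the full Bessel function $j_\sigma(a)$.

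For (i), since $\pi_1$ and $\pi_2$ share a central character, Proposition~\ref{Big Cell-GL(2)} applies to $W_1\in\mathcal{W}(\pi_1,\psi)$ and $W_2\in\mathcal{W}(\pi_2,\psi)$, and the computation \eqref{depenOFm} transcribes verbatim: for $m\ge 3l$,
\[
W_{1,m}(t(a,1)w)-W_{2,m}(t(a,1)w)=\frac{\mathrm{vol}(\J^{\mathscr u}_{3l})}{\mathrm{vol}(\J^{\mathscr u}_m)}\,[B_{3l}(t(a,1)w,W_1)-B_{3l}(t(a,1)w,W_2)],
\]
which by Lemma~\ref{a-support}\ref{a-support-item2} vanishes unless $a\in\p_F^{-9l}$. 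For (ii), I would take $m=3r$ with $r\gg 0$ --- in particular $r\ge\frac32 l$, so that $\p_F^{-9l}\subset\{a:|a|\le q^{2m}\}$ and Proposition~\ref{soudry}\ref{soudry-item2} is applicable, and also $m\ge\max\{l,n(\omega\chi^2),2n(\chi)\}$ so that Lemma~\ref{rhs} applies --- and invoke Proposition~\ref{soudry}\ref{soudry-item2} for $\sigma$, which is legitimate since $\psi^{-1}$ is again unramified and $j_\sigma$ is the Bessel function relative to $\psi^{-1}$: it gives $\mathrm{vol}(\J^{\mathscr u}_m)\,W'_m(t(a,1)w)=j_\sigma(a)$ for $|a|\le q^{2m}$, hence on all of $\p_F^{-9l}$. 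Substituting (i) and (ii) into the formula of Lemma~\ref{rhs}, restricting the $a$-integral to $\p_F^{-9l}$ (harmless, since the integrand vanishes outside it), and collecting the volume factors $\mathrm{vol}(\J^{\mathscr u}_{3l})/[\mathrm{vol}(\J^{\mathscr u}_m)]^2$, yields the stated identity of part~(1); extending the domain of integration back to $F^{\times}$ changes nothing.

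For part~(2), the function in question is, on $\p_F^{-9l}$, the product of $\omega_\pi^{-1}(a)\,[B_{3l}(t(a,1)w,W_1)-B_{3l}(t(a,1)w,W_2)]$, of $\omega_\sigma^{-1}(a)\,j_\sigma(a)$, and of $|a|_F^{-s}$, where $\omega=\omega_\pi\omega_\sigma$ and $\omega_{\pi_1}=\omega_{\pi_2}=:\omega_\pi$. Combining \eqref{depenOFm} with Proposition~\ref{soudry}\ref{soudry-item2} applied to $W_1$ and $W_2$ at level $m=3r$ identifies the bracketed $\pi$-difference on $\p_F^{-9l}$ with $\mathrm{vol}(\J^{\mathscr u}_{3l})^{-1}(j_{\pi_1}(a)-j_{\pi_2}(a))$, which is uniformly smooth there by Proposition~\ref{soudry}\ref{soudry-item3}; so is $\omega_\pi^{-1}$. (Alternatively, Lemma~\ref{Howe}\ref{Howe-item2} applied to $t(1+z,1)\in\J_{3l}$, on which $\varphi_{3l}$ is trivial, together with $w\,t(1+z,1)=t(1,1+z)\,w$ and $t(a,1+z)=(1+z)t(a(1+z)^{-1},1)$, shows $\omega_{\pi_j}^{-1}(a)\,B_{3l}(t(a,1)w,W_j)$ is $(1+\p_F^{3l})$-invariant, and the central characters cancel in the difference.) Next, $j_\sigma$ is $(1+\p_F^{3r_0})$-invariant on $\p_F^{-6r_0}\supseteq\p_F^{-9l}$ for a fixed $r_0$ with $6r_0\ge 9l$ by Proposition~\ref{soudry}\ref{soudry-item3}, $\omega_\sigma$ is smooth, and $|a|_F^{-s}$ is $\o_F^{\times}$-invariant; a product of uniformly smooth functions is uniformly smooth, which proves part~(2).

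The only genuinely new point relative to the non-split case is the surviving $\sigma$-factor: unlike the $\pi$-difference it does not stabilize in $m$, but is merely pinned down by Proposition~\ref{soudry}\ref{soudry-item2} at the cost of an extra $\mathrm{vol}(\J^{\mathscr u}_m)^{-1}$. The delicate step is therefore simply to fix one $m$ that is simultaneously $\ge 3l$, a multiple of $3$, and large enough for both the $\pi$-reduction and Proposition~\ref{soudry}\ref{soudry-item2} to be valid over the common support $\p_F^{-9l}$; with such an $m$ chosen, everything else is the mechanical transcription of Proposition~\ref{usmth} plus the Gauss-sum bookkeeping already carried out in Lemma~\ref{rhs}.
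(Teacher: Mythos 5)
Your proof is correct and follows essentially the same route as the paper's: use the identity \eqref{depenOFm} (valid because $\pi_1,\pi_2$ share a central character, so Proposition~\ref{Big Cell-GL(2)} applies) to drop the $\pi$-difference from level $m$ to level $3l$, use Lemma~\ref{a-support}\ref{a-support-item2} to cut the $a$-integral to $\p_F^{-9l}$, and then use Proposition~\ref{soudry}\ref{soudry-item2} to replace $W'_m(t(a,1)w)$ by $\mathrm{vol}(\J^{\mathscr u}_m)^{-1} j_\sigma(a)$ on that range, which produces the extra $\mathrm{vol}(\J^{\mathscr u}_m)^{-1}$ (hence the square in the denominator); your choice $m=3r$, $r\ge\tfrac32 l$ accomplishes the same thing as the paper's $m=3n$, $n\ge 2l$. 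For part~(2) your primary argument (identify the $\pi$-difference with $\mathrm{vol}(\J^{\mathscr u}_{3l})^{-1}(j_{\pi_1}-j_{\pi_2})$ and invoke Proposition~\ref{soudry}\ref{soudry-item3}) is the paper's, and the parenthetical alternative via Lemma~\ref{Howe}\ref{Howe-item2} and the matrix identity $t(a,1)w\,t(1+z,1)=(1+z)\,t(a(1+z)^{-1},1)w$ is a nice, more elementary way to see the same invariance without invoking the full Bessel function for $\pi_1,\pi_2$.
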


\begin{proof}
As in \eqref{depenOFm}, we obtain
\[
\begin{split}
& \int\limits_{F^{\times}} [B_m(t(a,1)w,W_{\Pi_1})-B_m(t(a,1)w,W_{\Pi_2})] (\omega\chi)^{-1}(a) |a|_F^{-s} 
   d^{\times}a\\
 & =\int\limits_{F^{\times}} [(W_1)_{m}(t(a,1)w)-(W_2)_{m}(t(a,1)w) ]W_m^{\prime}(t(a,1)w) (\omega\chi)^{-1}(a) |a|_F^{-s} d^{\times} a \\
  &=\frac{\mathrm{vol}(\J^{\mathscr u}_{3l})}{\mathrm{vol}(\J^{\mathscr u}_m)} \int\limits_{F^{\times}}  [(W_1)_{3l}(t(a,1)w)-(W_2)_{3l}(t(a,1)w) ]W_m^{\prime}(t(a,1)w) 
  (\omega\chi)^{-1}(a) |a|_F^{-s} d^{\times} a\\
\end{split}
\]
for $m \geq 3l$. By Lemma~\ref{a-support} \ref{a-support-item2}, we get
\[
 \begin{split}
& \int_{F^{\times}} [B_m(t(a,1)w,W_{\Pi_1})-B_m(t(a,1)w,W_{\Pi_2})] (\omega\chi)^{-1}(a) |a|_F^{-s} 
   d^{\times}a\\
   &=\frac{\mathrm{vol}(\J^{\mathscr u}_{3l})}{\mathrm{vol}(\J^{\mathscr u}_m)} \int_{\mathfrak{p}_F^{-9l}}  [(W_{1})_{3l}(t(a,1)w)-(W_2)_{3l}(t(a,1)w) ]W_m^{\prime}(t(a,1)w) 
  (\omega\chi)^{-1}(a) |a|_F^{-s} d^{\times} a.\\
 \end{split}
\]
Choosing $m=3n$ with $n \geq 2l$, we apply Proposition~ \ref{soudry} \ref{soudry-item2} to obtain 
\[
  \mathrm{vol}(\J_m^{\mathscr u}) W_m^{\prime}(t(a,1)w)=j_{\sigma}(a) \;\; \text{for} \;\; |a|_F \leq q^{2m}.
\]
Then inserting this into the above expression we get 
\begin{equation}
\label{a-integral}
 \begin{split}
& \int_{F^{\times}} [B_m(t(a,1)w,W_{\Pi_1})-B_m(t(a,1)w,W_{\Pi_2})] (\omega\chi)^{-1}(a) |a|_F^{-s} 
   d^{\times}a\\
   &=\frac{\mathrm{vol}(\J^{\mathscr u}_{3l})}{[\mathrm{vol}(\J^{\mathscr u}_m)]^2} \int_{\mathfrak{p}_F^{-9l}}  [(W_1)_{3l}(t(a,1)w)-(W_2)_{3l}(t(a,1)w) ] j_{\sigma}(a)
  (\omega\chi)^{-1}(a) |a|_F^{-s} d^{\times} a.\\
 \end{split}
\end{equation}
Plugging \eqref{a-integral} into \eqref{compactform}, we obtain the desired formula.
\par
For the second assertion, it follows from Proposition \ref{soudry} \ref{soudry-item3} that for $a\in \p_F^{-9l}$, the function is right invariant under $1+\p_F^{6l}$. 
\end{proof}

Putting Proposition~\ref{lhs}, Proposition~\ref{usmth-RS} and \eqref{lfe} together, we get 
\[
\begin{split}
 &\gamma(s,(\pi_1\otimes\chi),\psi)-\gamma(s,(\pi_2\otimes\chi),\psi)\\&=v_m d_{\omega,\chi,\psi}\int\limits_{\mathfrak{p}_F^{-9l}} [B_{3l}(t(a,1)w,W_1)-B_{3l}(t(a,1)w,W_2)]  j_{\sigma}(a)
  (\omega\chi)^{-1}(a) |a|_F^{-s} d^{\times} a,\\
   \end{split}
\]
where $v_m=\dfrac{\mathrm{vol}(\J^{\mathscr u}_{3l})}{ [\mathrm{vol}(\J^{\mathscr u}_m)]^2}$ and $
d_{\omega,\chi,\psi}=q^{2m+2n(1-s)} (\omega\chi^{2})(\varpi_F^{n}) \tau(\omega\chi^2,\psi,\varpi_F^{-n})\neq 0$. By uniform smoothness, the integral on the right hand side is $0$ for $\chi$ satisfying $n(\chi)>6l$. Since $L(s,(\pi\otimes\chi)\times\sigma)=1$ for a highly ramified $\chi$ according to \cite[Proposition 5.1]{JS}, this concludes the proof of Theorem~\ref{main} in the split case.

\begin{acknowledgment}
We thank Roger Howe for sending us a copy of \cite{H}.
\end{acknowledgment}

\end{document}